\newcommand{\doi}[1]{\url{https://doi.org/#1}}
\title[A counterexample to the PIA conjecture]
{A counterexample to the PIA conjecture for minimal log discrepancies}
\author{Yusuke Nakamura}
\address{Graduate School of Mathematics, Nagoya University, Furo-cho, Chikusa-ku, Nagoya, 464-8602, Japan.}
\email{y.nakamura@math.nagoya-u.ac.jp}
\urladdr{https://sites.google.com/site/ynakamuraagmath/}
\author{Kohsuke Shibata}
\address{School of Engineering, 
Tokyo Denki University, Adachi-ku, Tokyo 120-8551, Japan.}
\email{shibata.kohsuke@mail.dendai.ac.jp}
\subjclass[2020]{Primary 14E30; Secondary 14B05, 14E18, 14N30}
\keywords{minimal log discrepancy, precise inversion of adjunction, PIA conjecture, LSC conjecture, hyperquotient singularities}
\newtheorem{thm}{Theorem}[section]
\newtheorem{lem}[thm]{Lemma}
\newtheorem{prop}[thm]{Proposition}
\theoremstyle{definition}
\newtheorem{defi}[thm]{Definition}
\newtheorem{eg}[thm]{Example}
\newtheorem{conj}[thm]{Conjecture}
\theoremstyle{remark}
\newtheorem{rmk}[thm]{Remark}
\newtheorem{quest}[thm]{Question}
\newtheorem*{ackn}{Acknowledgements}
\begin{document}
\begin{abstract}
We give a counterexample to the PIA (precise inversion of adjunction) conjecture for minimal log discrepancies. 
We also give a counterexample to the LSC conjecture for families. 
\end{abstract}

\maketitle

\section{Introduction}
The minimal log discrepancy is an invariant of singularities in birational geometry. 
The PIA (precise inversion of adjunction) conjecture is a fundamental conjecture on the minimal log discrepancy, 
as well as the ACC (ascending chain condition) conjecture and the LSC (lower semi-continuity) conjecture. 
The ACC conjecture and the LSC conjecture are important, 
as Shokurov showed that the termination of flips conjecture follows from these conjectures (\cite{Sho04}). 
The PIA conjecture is also considered important like the usual inversion of adjunction when applying induction arguments on dimension.
It also has applications, such as reducing the LSC conjecture to the case where the ambient space has milder singularities (\cite{EM04}*{Theorem 1.2}). 

In this paper, we always work over an algebraically closed field $k$ of characteristic zero. 
The usual inversion of adjunction is the following theorem.
\begin{thm}[Inversion of adjunction, {\cite{92}*{Theorem 17.6}, \cite{Kaw07}}]\label{thm:IA}
Let $(X, \Delta)$ be a log pair over $k$ and let $D$ be a normal Cartier divisor such that $D \not \subset \operatorname{Supp} (\lfloor \Delta \rfloor )$. 
\begin{enumerate}
\item $(X, D + \Delta)$ is plt around $D$ if and only if $(D, \Delta |_D)$ is klt. 
\item $(X, D + \Delta)$ is log canonical around $D$ if and only if $(D, \Delta |_D)$ is log canonical.
\end{enumerate}
\end{thm}
\noindent
Theorem \ref{thm:IA}(1) was proved by Koll{\'a}r (\cite{92}*{17.6}), and Theorem \ref{thm:IA}(2) by Kawakita \cite{Kaw07}. 
See \cite{92} and \cite{Kaw07} for more general statements when $D$ is not normal or not Cartier. 
The PIA (precise inversion of adjunction) conjecture asserts that such equivalences hold at the level of minimal log discrepancies: 
\begin{conj}[PIA conjecture, {\cite{92}*{17.3.1}}]\label{conj:PIA2}
Let $(X, \Delta)$ be a log pair over $k$ and let $D$ be a normal Cartier prime divisor on $X$ such that $D \not \subset \operatorname{Supp} (\lfloor \Delta \rfloor )$. 
Let $x \in D$ be a closed point. 
Then 
\[
\operatorname{mld}_x ( X, D + \Delta \bigr) = \operatorname{mld}_x (D, \Delta |_D)
\]
holds. 
\end{conj}
\noindent
In this paper, we consider the following formulation using $\mathbb{R}$-ideals.
Note that any $\mathbb{Q}$-Cartier divisor can be associated with an $\mathbb{R}$-ideal (see Subsection \ref{subsection:LP}).  Therefore, if $X$ is $\mathbb{Q}$-Gorenstein, Conjecture \ref{conj:PIA2} is a special case of Conjecture \ref{conj:PIA}. 
\begin{conj}[PIA conjecture, {\cite{EMY03}*{Conjecture 1.5}}]\label{conj:PIA}
Let $(X, \mathfrak{a})$ be a log pair over $k$ and let $D$ be a normal Cartier prime divisor on $X$. 
Let $x \in D$ be a closed point. 
Suppose that $D$ is not contained in the cosupport of the $\mathbb{R}$-ideal sheaf $\mathfrak{a}$. 
Then 
\[
\operatorname{mld}_x \bigl( X, \mathfrak{a} \mathcal{O}_X(-D) \bigr) = \operatorname{mld}_x (D, \mathfrak{a} \mathcal{O}_D)
\]
holds. 
\end{conj}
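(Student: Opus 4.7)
The plan is to establish the two inequalities separately. Write $J := \mathfrak{a}\mathcal{O}_X(-D)$ for brevity.

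For the direction $\operatorname{mld}_x(X, J) \leq \operatorname{mld}_x(D, \mathfrak{a}\mathcal{O}_D)$, I would lift divisorial valuations from $D$ to $X$. Let $F$ be a prime divisor over $D$ centered at $x$ realizing (or approximating) $\operatorname{mld}_x(D, \mathfrak{a}\mathcal{O}_D)$, and take a log resolution $g: D' \to D$ of $(D, \mathfrak{a}\mathcal{O}_D)$ on which $F \subset D'$ appears as a component. By blowing up $X$ in centers compatible with $g$ (thought of as ideals of $\mathcal{O}_X$ via the ideal sheaf of $D$), one obtains a proper birational morphism $f: Y \to X$ such that the strict transform $\widetilde{D} \subset Y$ is isomorphic to $D'$ over $D$ and contains a prime divisor $E$ of $Y$ with $E|_{\widetilde{D}} = F$. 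The adjunction formula $K_{\widetilde{D}} = (K_Y + \widetilde{D})|_{\widetilde{D}}$, combined with the identity $f^{*}D = \widetilde{D} + \sum m_i E_i$ coming from the Cartier hypothesis, then yields $a_E(X, J) = a_F(D, \mathfrak{a}\mathcal{O}_D)$, so minimizing over $F$ gives the inequality.

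For the reverse inequality I would proceed via arc spaces, in the spirit of Ein--Mustață--Yasuda and Kawakita. The strategy is: given a prime divisor $E$ over $X$ centered at $x$, produce a prime divisor $F$ over $D$ with $a_F(D, \mathfrak{a}\mathcal{O}_D) \leq a_E(X, J)$. If $E$ is a component of the strict transform of $D$ this is essentially immediate; otherwise, the plan is to consider the contact locus $\operatorname{Cont}^{\bullet}(E) \subset J_\infty X$ and restrict to the sublocus of arcs factoring through $D$. Because $D$ is Cartier, a local equation $t$ of $D$ provides a splitting of arcs near $D$ as (arc of $D$) $\times$ (jet of $t$), and the twist by $\mathcal{O}_X(-D)$ appearing in $J$ precisely absorbs the codimension contribution from the $t$-direction. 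Mustața's formula for mld via codimensions of contact loci should then translate the $E$-bound on the $X$-side into an $F$-bound on the $D$-side.

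The main obstacle is that the conjecture allows $X$ to be singular (only $D$ is required normal and Cartier), whereas the arc-theoretic formula for $\operatorname{mld}$ is cleanest in the smooth or locally complete intersection case. I would first try a reduction: embed $X$ locally into a smooth ambient $A$, lift $D$ to a Cartier divisor $\widetilde{D} \subset A$ using that $D$ is Cartier on $X$, apply the smooth PIA theorem to $(A, \widetilde{D})$, and then transfer the information along $X \hookrightarrow A$ via the embedding formula for mld using the defining ideal $I_{X/A}$. The delicate point is controlling how the Jacobian/Nash ideals of $X \subset A$ restrict to $D$; pathologies here — when these ideals interact non-transversally with $D$ — are exactly what could prevent the two sides from matching, and I expect this transfer step to be the technical crux, especially in the non-$\mathbb{Q}$-Gorenstein or non-lci regime.
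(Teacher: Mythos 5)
This statement cannot be proved: it is stated in the paper as a \emph{conjecture} precisely because the paper's main result (Theorem \ref{thm:main}) constructs a counterexample to it. Take $X = A_n = \mathbb{A}^{n+3}_k/G$ with $G = \mathbb{Z}/3\mathbb{Z}$ acting by $\operatorname{diag}(1,\xi,\xi^2,\xi,\ldots,\xi)$, let $D = B_n$ be the (normal, Cartier, since $f = x_1^3+x_2^3+x_3^3$ is $G$-invariant) image of the Fermat cubic hypersurface, and $\mathfrak{a} = \mathcal{O}_X$. Then $\operatorname{mld}_{x_0}(A_n, B_n) \le 1 + \tfrac{n}{3}$, while $\operatorname{mld}_{x_0}(B_n) = n$, so equality fails for all $n \ge 2$, i.e.\ in every dimension $\dim X \ge 5$. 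So no argument along your lines (or any other) can close the reverse inequality in the stated generality.

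Locating the gap in your proposal: your first direction, $\operatorname{mld}_x(X, \mathfrak{a}\mathcal{O}_X(-D)) \le \operatorname{mld}_x(D,\mathfrak{a}\mathcal{O}_D)$, is fine --- this is the classical adjunction inequality (the paper cites it as [Kol92, 17.2.1] in the proof of Proposition \ref{prop:dim3}) and it holds unconditionally. The failure is exactly in the reverse direction, and exactly at the step you yourself flagged as the ``technical crux.'' The arc-space identities you invoke (Musta\c{t}\u{a}-type codimension formulas, the splitting of arcs along a local equation of $D$) are theorems only for $X$ smooth or locally complete intersection; the reduction ``embed $X$ into a smooth ambient $A$, apply smooth PIA, and transfer along $I_{X/A}$'' has no valid transfer mechanism when $X$ is not lci, and in the counterexample $X$ is a cyclic quotient singularity. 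The paper's Remark \ref{rmk:thin} identifies the precise arc-theoretic pathology: for the non-klt divisor $B_n$, the twisted-arc scheme attached to $\gamma$ is a thin subset of itself (the equation $x_1^3 + t x_2^3 + t^2 x_3^3 = 0$ has only the trivial solution over $k[[t]]$), so the arc-counting machinery that proves PIA in the smooth, lci, and quotient-with-klt-$D$ cases breaks down. The correct known scope is: $\dim X \le 3$, $X$ smooth, $X$ lci, or $X$ with quotient singularities and $D$ klt at $x$; outside that, and in particular here, the ``precise'' inversion of adjunction is simply false.
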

\noindent
The inequality ``$\le$'' is known to always hold (\cite{92}*{Theorem 17.2}).  
Therefore, the opposite inequality ``$\ge$'' constitutes the essential part of the conjecture.

Conjecture \ref{conj:PIA} is known to be true in the following cases: 
\begin{itemize}
\item[({\ref{conj:PIA}}.1)]
the case when $\dim X = 2$ by Shokurov (\cite{Sho93}). 

\item[({\ref{conj:PIA}}.2)]
the case when $X$ is smooth by Ein, Musta{\c{t}}{\v{a}}, and Yasuda (\cite{EMY03}). 

\item[({\ref{conj:PIA}}.3)]
the case when $X$ is a locally complete intersection variety by Ein and Musta{\c{t}}{\v{a}} (\cite{EM04}). 

\item[({\ref{conj:PIA}}.4)]
the case when $\dim X = 3$ (see Proposition \ref{prop:dim3}). 

\item[({\ref{conj:PIA}}.5)]
the case when $X$ has only quotient singularities and $D$ is klt at $x$ by the authors (\cites{NS22, NS2}). 

\item[({\ref{conj:PIA}}.6)]
In \cites{NS22, NS2} (see also \cite{NS3}), the authors more generally prove the PIA conjecture for the case when $X$ is a subvariety of a variety $Y$ such that 
\begin{itemize}
\item 
$Y$ has only quotient singularities at $x$, 

\item 
$X$ is locally defined by $c$ equations in $Y$ at $x$ for $c := \dim Y - \dim X$, and 

\item 
$X$ and $D$ are klt at $x$. 
\end{itemize}
\end{itemize}
See also \cite{BCHM}*{Corollary 1.4.5} for a related result, where a version of the precise inversion of adjunction is proved.
We emphasize that the klt assumption on the singularities of $D$, as in ({\ref{conj:PIA}}.5) and ({\ref{conj:PIA}}.6), is not required in the original conjecture or the known results, namely Theorem \ref{thm:IA} and ({\ref{conj:PIA}}.1)--({\ref{conj:PIA}}.4). 

The purpose of this paper is to give a counterexample to the PIA conjecture. 

\begin{eg}\label{eg:main}
Let $\xi \in k$ be a primitive third root of unity. 
Let $n \in \mathbb{Z}_{\ge 0}$.
We define $R := k[x_1, x_2, \ldots , x_{n+3}]$ and 
\[
f := x_1 ^3 + x_2 ^3 + x_3 ^3 \in R, \qquad
\gamma := \operatorname{diag} \bigl( 1, \xi, \xi^2, \overbrace{\xi , \cdots , \xi}^{n} \bigr) \in \operatorname{GL}_{n+3}(k). 
\] 
We define $G := \langle \gamma \rangle = \{ 1, \gamma , \gamma^2 \} \subset \operatorname{GL}_{n+3}(k)$, and 
\begin{align*}
\overline{A}_n &:= \operatorname{Spec}(R) = \mathbb{A}^{n+3}_k, & A_n&:= \overline{A}_n/G = \operatorname{Spec}\bigl( R^G \bigr), \\
\overline{B}_n &:= \operatorname{Spec}(R/(f)), & B_n&:= \overline{B}_n/G = \operatorname{Spec} \bigl( R^G/(f) \bigr). 
\end{align*}
Let $\overline{x}_0 = (0, \cdots, 0)$ denote the origin of $\overline{A}_n$, 
and let $x_0 \in A_n$ denote the image of $\overline{x}_0$. 
\end{eg}

\begin{thm}\label{thm:main}
In Example \ref{eg:main}, we have the following assertions. 
\begin{itemize}
\item[(1)] $\operatorname{mld}_{x_0}(A_n, B_n)  \le 1 + \frac{n}{3}$ (see Appendix \ref{section:exact} for the exact value). 
\item[(2)] $\operatorname{mld}_{x_0}(B_n)  = n$. 
\item[(3)] When $n \ge 2$, 
Conjecture \ref{conj:PIA} does not hold for $(X, D) = (A_n, B_n)$ and $\mathfrak{a} = \mathcal{O}_{A_n}$. 

\item[(3')] When $n \ge 2$, Conjecture \ref{conj:PIA2} does not hold for $(X, D) = (A_n, B_n)$ and $\Delta = 0$.
\end{itemize}
\end{thm}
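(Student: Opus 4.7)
The plan is to prove (1) and (2) independently; part (3) then follows at once since $1 + \tfrac{n}{3} < n$ whenever $n \ge 2$, so the PIA equality already fails on numerical grounds.

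For (1), I would exhibit one divisor realizing the bound. Viewing $A_n$ as a toric variety with lattice $N = \mathbb{Z}^{n+3} + \mathbb{Z}\cdot\tfrac{1}{3}(0,1,2,1,\ldots,1)$ and cone the positive orthant, the primitive lattice vector
\[
v = \bigl(1,\tfrac{1}{3},\tfrac{2}{3},\tfrac{1}{3},\ldots,\tfrac{1}{3}\bigr) \in N
\]
lies in the interior of the cone, so the corresponding toric divisor $E_v$ is centered at $\{x_0\}$. One then computes $a_{E_v}(A_n) = \sum_i v_i = 2 + \tfrac{n}{3}$ and, since $x_1^3, x_2^3, x_3^3$ are $G$-invariant monomials, $v_{E_v}(f) = \min\{3v_1, 3v_2, 3v_3\} = 1$, so $a_{E_v}(A_n, B_n) = 1 + \tfrac{n}{3}$.

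For (2), the first step is to compute $\operatorname{mld}_{\overline{x}_0}(\overline{B}_n) = n$ upstairs. Using the product decomposition $\overline{B}_n = C_0 \times \mathbb{A}^n$ with $C_0 = V(x_1^3 + x_2^3 + x_3^3) \subset \mathbb{A}^3$ the affine cone on the Fermat elliptic curve, together with $\operatorname{mld}_0(C_0) = 0$ (the exceptional elliptic curve of the blow-up at the vertex has log discrepancy $0$ by adjunction), the product formula for mlds yields $\operatorname{mld}_{\overline{x}_0}(\overline{B}_n) = n$. Equivalently, one can invoke the Ein--Musta\c{t}\v{a}--Yasuda PIA $({\ref{conj:PIA}}.2)$ applied to the smooth pair $(\overline{A}_n, \overline{B}_n)$ and compute $\operatorname{mld}_{\overline{x}_0}(\overline{A}_n, \overline{B}_n)$ directly from the ordinary blow-up at $\overline{x}_0$.

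The main obstacle is to transfer this equality to $B_n$. My plan is to construct the blow-up $\widetilde{\overline{B}_n} \to \overline{B}_n$ along the singular locus $\{x_1 = x_2 = x_3 = 0\} \cong \mathbb{A}^n \subset \overline{B}_n$ and verify by a chart-by-chart computation that $\widetilde{\overline{B}_n}$ is smooth and that $G$ acts \emph{freely} on it; for instance, in the chart $x_1 = t$, $x_2 = ty_2$, $x_3 = ty_3$ the strict transform is $1 + y_2^3 + y_3^3 = 0$, on which $\gamma$ has no fixed point. Because any $G$-equivariant blow-up of a free action remains free, every divisorial valuation over $\overline{B}_n$ centered at $\overline{x}_0$ is realized on a $G$-equivariant model carrying a free $G$-action, and therefore has trivial inertia in the degree-$3$ Galois extension $k(\overline{B}_n)/k(B_n)$. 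The standard log-discrepancy formula for quotients \'etale in codimension $1$ then gives $a_E(B_n) = a_{\overline{E}}(\overline{B}_n) \ge n$ for every such $E$ with lift $\overline{E}$, whence $\operatorname{mld}_{x_0}(B_n) \ge n$. The reverse inequality is supplied by the $G$-invariant exceptional divisor $\overline{E}_0$ of the blow-up of $\overline{B}_n$ at $\overline{x}_0$: adjunction on the ambient blow-up of $\overline{A}_n$ at $\overline{x}_0$, combined with $\operatorname{mult}_{\overline{x}_0}(f) = 3$, gives $a_{\overline{E}_0}(\overline{B}_n) = n$, and since $G$ acts nontrivially on $\overline{E}_0$ its inertia is still trivial, so $\overline{E}_0$ descends to $B_n$ with $a_{E_0}(B_n) = n$.
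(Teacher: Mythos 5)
Your proof is correct, and the argument for part (2) is in substance the same as the paper's, but unpacked rather than abstracted. The paper introduces the notion of a \emph{virtually free} $G$-action, proves it is preserved under products (Theorem \ref{thm:prod}), proves it forces $\operatorname{mld}$ to be preserved by the quotient map (Proposition \ref{prop:eq}), and then applies these to $\overline{B}_n = H \times \mathbb{A}^n$ with $H = \{x_1^3+x_2^3+x_3^3=0\}$, checking in Lemma \ref{lem:H} that the induced action on $\operatorname{Bl}_0 H$ is free. Your single blow-up of $\overline{B}_n$ along $\{x_1=x_2=x_3=0\}$ is exactly $\operatorname{Bl}_0 H \times \mathbb{A}^n$, and your inertia/\'etale argument for why $a_E(B_n) = a_{\overline{E}}(\overline{B}_n)$ is the unrolled version of Proposition \ref{prop:eq} plus Lemmas \ref{lem:equiv_resol}--\ref{lem:normalization}. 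What the paper's abstraction buys is reuse: the virtual-freeness lemmas are stated once and then also feed the klt discussion (Theorem \ref{thm:kltcase}) and Remark \ref{rmk:thin}. What your version buys is concreteness; a reader sees explicitly on which model the freeness is verified. For part (1) your route is genuinely different: the paper deduces $\operatorname{mld}_{x_0}(A_n,B_n) \le \operatorname{mld}_{x_0}(A_n) - 1 = 1 + n/3$ from the age formula and the generality that subtracting a Cartier divisor through the point drops the mld by at least $1$; you instead exhibit the concrete toric valuation $E_v$ with $v = (1,\tfrac13,\tfrac23,\tfrac13,\ldots,\tfrac13)$ and compute $a_{E_v}(A_n,B_n)=1+n/3$ directly. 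Both are fine, and your $v$ is indeed primitive in $N = \mathbb{Z}^{n+3} + \mathbb{Z}\cdot\tfrac13(0,1,2,1,\ldots,1)$. Two small points worth making explicit if you write this up: (i) you should verify the $G$-action on the blow-up has no fixed points in \emph{all} three affine charts, not just the $x_1$-chart (the other two are analogous but not identical since $\gamma$ treats $x_1,x_2,x_3$ asymmetrically), and (ii) you implicitly use that $\overline{B}_n \to B_n$ is \'etale in codimension one; this holds because $\operatorname{Fix}(\gamma) \cap \overline{B}_n = \{\overline{x}_0\}$ has codimension $n+2 \ge 2$.
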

Thus, the PIA conjecture has a counterexample when $\dim X \ge 5$. 
The conjecture is still open when $\dim X = 4$ (see  Question \ref{quest:dim4}). 
Note that $B_n$ is not klt at $x_0$. 
Therefore, it shows that the assumption ``$D$ is klt at $x$'' in the authors' result (\ref{conj:PIA}.5) above was necessary.  
It also suggests that the PIA conjecture might be modified to add the additional assumption ``$D$ is klt at $x$'' (see Question \ref{quest:PIAm}). 

The same example can be used to construct a counterexample to the LSC conjecture for families (see Example \ref{eg:family}).
The LSC conjecture for families (Conjecture \ref{conj:LSCg}) can naturally be viewed as an mld-version of the deformation invariance of terminal singularities proved by Nakayama (\cite{Nakbook}*{Ch.VI\ Corollary 5.3(2)}) and canonical singularities proved by Kawamata (\cite{Kaw99}).  
We refer the reader to \cite{IshiiBook}*{Section 9} for further details on the deformation invariance of other types of singularities and the semi-continuity of other invariants.  
Example \ref{eg:family} shows that Conjecture \ref{conj:LSCg} fails when the fibers are not klt. 
Note that the LSC conjecture for families is a stronger statement than the original LSC conjecture (Conjecture \ref{conj:LSC}), 
and Example \ref{eg:family} does not disprove the original LSC conjecture.

An interesting point of Example \ref{eg:main} is that we have
\[
\operatorname{mld}_{x_0}(B_n)  = \operatorname{mld}_{\overline{x}_0}(\overline{B}_n) \tag{$\spadesuit$}
\]
even though the $G$-action on $\overline{B}_n$ is not free. 
To explain this phenomenon ($\spadesuit$), we introduce a new concept called ``virtually free'' for a finite group action (see Section \ref{section:ef}). 
This concept lies between ``free'' and ``free in codimension one''. 
We prove that the minimal log discrepancy remains unchanged after taking the quotient of a virtually free action (Proposition \ref{prop:eq}). 
In order to prove the $G$-action on $\overline{B}_n$ is virtually free, 
we prove a permanence property of the virtual freeness when taking the product of varieties (Theorem \ref{thm:prod}). 

We prove that the virtual freeness is equivalent to the freeness when the considered variety is klt (Theorem \ref{thm:kltcase}). 
This fact explains why it is not possible to make a similar counterexample with only klt singularities. 
We note that the phenomenon $(\spadesuit)$ can also be explained from the perspective of arc space theory (see Remark \ref{rmk:thin}).

The paper is organized as follows. 
In Section \ref{section:ef}, we introduce the new concept called ``virtual freeness'' and discuss its properties. 
In Section \ref{section:proof}, we give a proof of Theorem \ref{thm:main}. 
In Section \ref{section:LSC}, we discuss two pathological examples related to the semi-continuity of minimal log discrepancies (Examples \ref{eg:family}, \ref{eg:adic}). 
In Section \ref{section:Q}, we give a proof of the PIA conjecture in dimension three, which seems to be well-known to experts (Proposition \ref{prop:dim3}). 
We also give some related questions (Questions \ref{quest:dim4}, \ref{quest:PIAm}). 
In Appendix \ref{section:exact}, we determine the exact value of $\operatorname{mld}_{x_0}(A_n, B_n)$.

\begin{ackn}
We would like to thank Professors Shigeharu Takayama, Takayuki Koike, and Chen Jiang for discussions. 
We would also like to thank the referees for their valuable comments and suggestions, 
which have improved the quality of the paper.
The first author is partially supported by Inamori Foundation and by JSPS KAKENHI No.\ 18K13384 and 22K13888. 
The second author is partially supported by JSPS KAKENHI No.\ 19K14496 and 23K12958.
\end{ackn}

\section{Preliminaries}\label{section:prelimi}

\subsection{Notation}\label{subsection:notation}

\begin{itemize}
\item 
We basically follow the notations and the terminologies in \cite{Har77} and \cite{Kol13}.

\item 
Throughout this paper, $k$ is an algebraically closed field of characteristic zero. 
We say that $X$ is a \textit{variety over} $k$ if 
$X$ is an integral scheme that is separated and of finite type over $k$. 
\end{itemize}

\subsection{Log pairs}\label{subsection:LP}
A \textit{log pair} $(X, \mathfrak{a})$ is a normal $\mathbb{Q}$-Gorenstein variety $X$ over $k$ and 
an $\mathbb{R}$-ideal sheaf $\mathfrak{a}$ on $X$. 
Here, an $\mathbb{R}$-\textit{ideal sheaf} $\mathfrak{a}$ on $X$ is a formal product 
$\mathfrak{a} = \prod _{i = 1} ^s \mathfrak{a}_i ^{r_i}$, where $\mathfrak{a}_1, \ldots, \mathfrak{a}_s$ are non-zero coherent ideal sheaves on $X$ and $r_1, \ldots , r_s$ are positive real numbers. 
For a morphism $Y \to X$ and an $\mathbb{R}$-ideal sheaf $\mathfrak{a} = \prod _{i = 1} ^s \mathfrak{a}_i ^{r_i}$, 
let $\mathfrak{a} \mathcal{O}_Y$ denote the $\mathbb{R}$-ideal sheaf $\prod _{i = 1} ^s (\mathfrak{a}_i \mathcal{O}_Y)  ^{r_i}$ on $Y$. 

Let $(X, \mathfrak{a} = \prod _{i = 1} ^s \mathfrak{a}_i ^{r_i})$ be a log pair. 
Let $f: X' \to X$ be a proper birational morphism from a normal variety $X'$, and let $E$ be a prime divisor on $X'$.  
Then, the \textit{log discrepancy} of $(X, \mathfrak{a})$ at $E$ is defined as 
\[
a_E(X, \mathfrak{a}) := 1 + \operatorname{coeff}_E (K_{X'} - f^* K_X) - \operatorname{ord}_E ( \mathfrak{a} ), 
\]
where $\operatorname{ord}_E ( \mathfrak{a} ) := \sum _{i=1} ^s r_i \operatorname{ord}_E ( \mathfrak{a}_i )$. 
The image $f(E)$ is called the \textit{center} of $E$ on $X$ and denoted by $c_X(E)$. 
For a closed subset $W \subset X$, the \textit{minimal log discrepancy} along $W$ is defined as 
\[
\operatorname{mld}_W (X, \mathfrak{a}) := \inf _{c_X(E) \subset W} a_E (X, \mathfrak{a})
\]
if $\dim X \ge 2$, where the infimum is taken over all prime divisors $E$ over $X$ with center $c_X(E) \subset W$. 
When $\dim X = 1$, we define $\operatorname{mld}_W (X, \mathfrak{a}) := \inf _{c_X(E) \subset W} a_E (X, \mathfrak{a})$ 
if the infimum is non-negative and 
$\operatorname{mld}_W (X, \mathfrak{a}) := - \infty$ otherwise. 
It is known that $\operatorname{mld}_W (X, \mathfrak{a}) \in \mathbb{R}_{\ge 0} \cup \{ - \infty \}$ (cf.\ \cite{KM98}*{Corollary 2.31}). 

Let $D$ be a $\mathbb{Q}$-Cartier divisor on $X$. 
Take $\ell \in \mathbb{Z} _{>0}$ such that $\ell D$ is Cartier. 
Then, we define $a_E (X, D) := a_E \bigl( X, (\mathcal{O}_X(- \ell D)) ^{\frac{1}{\ell}} \bigr)$ and $\operatorname{mld}_W (X, D) := \operatorname{mld}_W \bigl( X, (\mathcal{O}_X(- \ell D)) ^{\frac{1}{\ell}} \bigr)$. 
Note that these values do not depend on the choice of $\ell$.

When $\mathfrak{a} = \mathcal{O}_X$, we define $a_E (X) := a_E (X, \mathfrak{a})$ and $\operatorname{mld}_W (X) := \operatorname{mld}_W (X, \mathfrak{a})$ for simplicity. 
When $W = \{ x \}$ for some closed point $x \in X$, 
we denote $\operatorname{mld}_{\{ x \}}$ by $\operatorname{mld}_x$ for simplicity. 

The minimal log discrepancy of a quotient singularity is known to be computable via the age function. 
\begin{defi}[\cite{NSs}*{Definition 2.3}]\label{defi:age}
Let $n$ be a positive integer and 
let $G \leqslant \operatorname{GL}_{n}(k)$ be a finite subgroup. 
Let $d := \# G$ be the order of $G$, and let $\xi \in k$ be a primitive $d$th root of unity. 
Since each $g \in G$ has finite order, $g$ is conjugate to a diagonal matrix $\operatorname{diag}(\xi ^{e_1}, \ldots , \xi ^{e_n})$ 
with $1 \le e_i \le d$. 
Then, we define $\operatorname{age}'(g) := \sum _{i = 1} ^n \frac{e_i}{d}$. 
\end{defi}
\begin{prop}[\cite{NSs}*{Proposition 2.8}]\label{prop:age}
Let $n$ be a positive integer and 
let $G \leqslant \operatorname{GL}_{n}(k)$ be a finite subgroup. 
Suppose that $G$ does not contain a pseudo-reflection. 
Let $x_0 \in \mathbb{A}^n_k / G$ be the image of the origin of $\mathbb{A}^n _k$. 
Then, we have
\[
\operatorname{mld}_{x_0} \bigl( \mathbb{A}^n_k / G \bigr) = \min \{ \operatorname{age}'(g) \mid g \in G \}.
\]
\end{prop}
\noindent
In this paper, we apply Proposition \ref{prop:age} only in the case of cyclic groups.

\section{Virtually free action}\label{section:ef}
In this paper, an action of a finite group $G$ on a variety $X$ is called \textit{free in codimension one} 
if it is free on the set of closed points, $|X|_{\rm cl}$, away from a closed subset of codimension at least two.

In this section, we introduce a new concept called ``virtually free'' for a finite group action. 
This concept lies between ``free'' and ``free in codimension one''. 
In Proposition \ref{prop:eq}, we prove that the minimal log discrepancy remains unchanged after taking the quotient of a virtually free action. 
In Proposition \ref{prop:char}, we provide several characterizations of the virtual freeness. 
In Theorem \ref{thm:prod}, we prove a permanence property of the virtual freeness when taking the product of varieties. 
In Theorem \ref{thm:kltcase}, we show that the virtual freeness is equivalent to the freeness when the considered variety is klt. 

\begin{defi}\label{defi:ess}
Let $G$ be a finite group, and let $X$ be a normal variety. 
We say that a $G$-action on $X$ is \textit{virtually free} if the following condition holds: 
\begin{itemize}
\item 
For any $G$-equivariant proper birational morphism $Y \to X$ from a normal variety $Y$ with a $G$-action, 
the $G$-action on $Y$ is free in codimension one. 
\end{itemize}
\end{defi}

\begin{rmk}\label{rmk:between}
By the definition, we have implications $(1) \Rightarrow (2) \Rightarrow (3)$ among the following three conditions:  
\begin{enumerate}
\item a $G$-action on $X$ is free. 
\item a $G$-action on $X$ is virtually free. 
\item a $G$-action on $X$ is free in codimension one.
\end{enumerate}
In Theorem \ref{thm:kltcase}, we will see $(2) \Rightarrow (1)$ when $X$ is klt. 
In Lemma \ref{lem:H}, we give an example that is virtually free but not free. 
\end{rmk}

\begin{lem}\label{lem:equiv_resol}
Suppose that a finite group $G$ acts on a normal variety $X$. 
Let $E$ be a divisor over $X$. 
Then, there exists a $G$-equivariant proper birational morphism $Y \to X$ from a normal variety $Y$ with a $G$-action such that $E$ is a prime divisor on $Y$. 
\end{lem}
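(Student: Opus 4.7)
The plan is to first produce any proper birational model on which $E$ appears as a prime divisor, and then symmetrize it into a $G$-equivariant one.

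Since $E$ is a divisor over $X$, I would start from a proper birational morphism $f_0 \colon Y_0 \to X$ from a normal variety $Y_0$ on which $E$ is a prime divisor; no $G$-action is assumed on $Y_0$ at this stage. To upgrade $Y_0$ to a $G$-equivariant model, form the fiber product
\[
\widetilde{Y} \;:=\; \prod_{X,\, g \in G} Y_0^{(g)},
\]
where $Y_0^{(g)}$ denotes the variety $Y_0$ equipped with the twisted structure morphism $g \circ f_0 \colon Y_0 \to X$. Let $Y'$ be the unique irreducible component of $\widetilde{Y}$ dominating $X$, and let $Y \to Y'$ be its normalization. Then $Y \to X$ is automatically proper and birational, and $Y$ is a normal variety.

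The group $G$ acts on $\widetilde{Y}$ by the factor-permutation rule $h \cdot (y_g)_{g \in G} := (y_{h^{-1}g})_{g \in G}$. A point $(y_g)_{g}$ lies over $x \in X$ if and only if $(g \circ f_0)(y_g) = x$ for every $g$, from which a direct check shows that its $h$-translate lies over $h \cdot x$; hence the action is compatible with the given $G$-action on $X$. The dominating component $Y'$ is $G$-stable because the common open locus $U \subseteq X$ over which every $g \circ f_0$ is an isomorphism is itself $G$-invariant, and the action lifts to the normalization $Y$. Thus $Y \to X$ becomes a $G$-equivariant proper birational morphism. Finally, projection to the $g = 1$ factor gives a proper birational morphism $Y \to Y_0$ between normal varieties, so the strict transform of $E$ on $Y$ is a prime divisor defining the same valuation as $E$; identifying $E$ with this strict transform gives the desired model.

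I do not expect any step to be a serious obstacle. The essential content is the fiber-product construction, which simultaneously forces $G$-equivariance and ensures the resulting model still dominates $Y_0$. The only point that requires some care is verifying the $G$-stability of the main component, which is handled by the $G$-invariance of the common isomorphism locus noted above.
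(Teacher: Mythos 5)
Your proof is correct, but it takes a genuinely different route from the paper's. The paper's one-line proof cites \cite{KM98}*{Lemma~2.45} and observes that the model produced there---by repeatedly blowing up the center of $E$---can be made $G$-equivariant by instead blowing up the $G$-orbit of the center at each stage and normalizing; this directly yields a $G$-equivariant tower on which $E$ eventually appears as a prime divisor. You instead treat a non-equivariant model $Y_0$ on which $E$ already appears as a black box and symmetrize it afterwards by the Galois-closure-type fiber product $\prod_{X,\,g\in G} Y_0^{(g)}$. The points you flag as requiring care are exactly the right ones, and they all hold: the dominating component $Y'$ is unique and $G$-stable because the preimage of the $G$-invariant common isomorphism locus $\bigcap_{g\in G}gU$ sits diagonally in $\widetilde{Y}$ as an irreducible dense open of $Y'$; the $G$-action lifts to the normalization $Y$ by its universal property; and $Y\to Y_0$ is proper by cancellation since $Y\to X$ is proper and $Y_0\to X$ is separated, so the strict transform of $E$ is a prime divisor on $Y$ realizing the same valuation. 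Your approach is more modular---it cleanly separates ``reach the divisor'' from ``force equivariance'' and would apply whenever one has \emph{some} model realizing $E$ without reopening its construction---whereas the paper's is more explicit and produces the equivariant model as a concrete sequence of $G$-orbit blow-ups, which is convenient if one later needs to control the intermediate steps.
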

\begin{proof}
Lemma 2.45 in [KM98] says that, by repeatedly blowing up along the center of $E$, we obtain a sequence
$Z_n \to Z_{n-1} \to \cdots \to Z_0=X$ such that $E$ becomes a divisor on $Z_n$. 
Let $W_n \to W_{n-1} \to \cdots \to W_0=X$ be the sequence of blow-ups along the $G$-orbit of the center of $E$ (i.e., $W_{i+1} \to W_i$ is the blow-up of $W_i$ along $\bigcup _{g \in G} g \big( c_{W_i}(E) \big)$). 
Then, all blow-ups in this sequence are $G$-equivariant. 
Furthermore, $E$ becomes a divisor on $W_n$ since each blow-up $W_{i+1} \to W_{i}$ is isomorphic to 
the corresponding blow-up $Z_{i+1} \to Z_i$ at the generic points of the centers of $E$ on $W_i$ and $Z_i$. 
Therefore, the normalization of $W_n$ is the desired variety. 
\end{proof}

\begin{lem}\label{lem:normalization}
Suppose that a finite group $G$ acts on a normal variety $X$. 
Let $W \to X/G$ be a proper birational morphism from a normal variety $W$. 
Let $W'$ be the normalization of $W$ in the field $k(X)$ of fractions of $X$. 
Then, the $G$-action on $X$ extends to $W'$, and we have $W' / G \simeq W$. 
\end{lem}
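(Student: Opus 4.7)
The plan is to exploit the fact that the extension $k(X)/k(X/G)$ is Galois with group $G$ (this is standard for the quotient of a normal variety by a finite group action) and to observe that, since $W \to X/G$ is birational, we have $k(W) = k(X/G)$. Thus $W' \to W$ is the normalization of $W$ in the Galois extension $k(X)/k(W)$, which is automatically a finite morphism, and the definition and key properties of the $G$-action come from Galois theory together with the normality of $W$.

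First I would construct the $G$-action on $W'$. Working locally, if $\operatorname{Spec}(A) \subset W$ is an affine open, then its preimage in $W'$ is $\operatorname{Spec}(A')$, where $A'$ is the integral closure of $A$ in $k(X)$. For each $g \in G$, the automorphism $g : k(X) \to k(X)$ fixes $k(W) = k(X/G)$ pointwise, so it preserves $A \subset k(W)$ and permutes the elements of $k(X)$ integral over $A$. Hence $g(A') = A'$, which gives a well-defined $G$-action on each affine chart. These local actions are compatible and glue to a $G$-action on $W'$ over $W$, extending the $G$-action on $X$ under the birational identification of $W'$ and $X$ at the generic point.

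Second, I would verify that $W'/G \simeq W$. Since the $G$-action is affine-locally given by the rings $A'$, the quotient is covered by $\operatorname{Spec}(A'^G)$, so it suffices to show $A'^G = A$. The inclusion $A \subset A'^G$ is clear. Conversely, any element $a \in A'^G$ lies in $A' \cap k(X)^G = A' \cap k(W)$, so it is integral over $A$ and lies in $k(W)$; since $W$ is normal, $A$ is integrally closed in $k(W)$, and therefore $a \in A$. This yields $A'^G = A$ and hence $W'/G \simeq W$ as schemes over $X/G$.

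There is no real obstacle here; the only point that needs attention is the correct application of $k(X)^G = k(X/G)$ together with the normality of $W$ to establish $A'^G = A$, and one should also remark that the local constructions glue, which is automatic from the functoriality of integral closure in a field extension.
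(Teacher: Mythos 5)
Your proof is correct. The paper's ``proof'' of this lemma consists of the single sentence ``The proof is straightforward,'' so there is no competing argument to compare against; you have simply supplied the details that the authors omitted. Your route is the natural one: use $k(X/G) = k(X)^G$ and $k(W) = k(X/G)$, observe that each $g \in G$ therefore restricts to the identity on each affine coordinate ring $A$ of $W$ and hence stabilizes its integral closure $A'$ in $k(X)$, note that these actions glue by functoriality of integral closure under localization, and finally deduce $A'^G = A$ from the normality of $W$ via $A'^G \subset A' \cap k(W)$. One could add the one-line observation that $W' \to W$ is finite by Noether's finiteness of integral closure, and that the induced map $W' \to X$ (from the universal property of normalization applied to $W' \to X/G$) is $G$-equivariant, which makes precise the sense in which the $G$-action on $X$ ``extends'' to $W'$; but these are exactly the kind of routine points the authors judged unnecessary to spell out.
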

\begin{proof}
Let $U = \operatorname{Spec} R$ be an affine open subset of $W$, and let $S$ be the normalization of $R$ in $k(X)$. 
It is sufficient to show that $S ^G = R$. 
We have $k(X) ^G = k(X/G) = Q(R)$, where $Q(R)$ denotes the field of fractions of $R$. 
Therefore, we have 
\[
S ^G = S \cap k(X)^G = S \cap Q(R) = R. 
\]
Here, the last equality follows from the normality of $R$. 
\end{proof}

\begin{prop}\label{prop:eq}
Suppose that a finite group $G$ acts on a normal $\mathbb{Q}$-Gorenstein variety $X$. 
Suppose that the $G$-action on $X$ is virtually free. 
Let $\mathfrak{a}$ be a non-zero $\mathbb{R}$-ideal sheaf on $X/G$. 
Then, we have 
\[
\operatorname{mld}_{x}(X, \mathfrak{a}\mathcal{O}_X) = \operatorname{mld}_{x'}(X/G, \mathfrak{a})
\]
for any closed point $x \in X$ and its image $x'$ on the quotient variety $X/G$. 
\end{prop}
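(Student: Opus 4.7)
The plan is to establish a log-discrepancy-preserving correspondence between prime divisors over $X$ with center through $x$ and prime divisors over $X/G$ with center through $x'$, so that taking infima produces the equality. The key input is that when a finite group $H$ acts freely in codimension one on a normal variety $Z$, the quotient $\pi_Z : Z \to Z/H$ is étale in codimension one, whence $K_Z = \pi_Z^* K_{Z/H}$ and pullbacks of ideals preserve vanishing orders at generic points of prime divisors. Virtual freeness of $G$ is exactly what makes this étale-in-codimension-one property available on every $G$-equivariant birational modification I will need.

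To show $\operatorname{mld}_x(X, \mathfrak{a}\mathcal{O}_X) \ge \operatorname{mld}_{x'}(X/G, \mathfrak{a})$, I take a prime divisor $E$ over $X$ realized on $f : Y \to X$ with $c_X(E) \ni x$; by Lemma \ref{lem:equiv_resol} I may assume $Y \to X$ is $G$-equivariant, so virtual freeness makes the $G$-action on $Y$ free in codimension one. Then the stabilizer of the generic point of $E$ is trivial, $E' := \pi_Y(E)$ is a prime divisor on $Y/G$, and $\pi_Y$ is étale at the generic point of $E$. Writing $g : Y/G \to X/G$ and combining $K_X = \pi^* K_{X/G}$, $K_Y = \pi_Y^* K_{Y/G}$, and $\pi \circ f = g \circ \pi_Y$, a coefficient comparison at $E$ gives
\[
\operatorname{coeff}_E(K_Y - f^* K_X) = \operatorname{coeff}_{E'}(K_{Y/G} - g^* K_{X/G}),
\]
while étaleness at the generic point of $E$ yields $\operatorname{ord}_E(\mathfrak{a}\mathcal{O}_Y) = \operatorname{ord}_{E'}(\mathfrak{a}\mathcal{O}_{Y/G})$. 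Hence $a_E(X, \mathfrak{a}\mathcal{O}_X) = a_{E'}(X/G, \mathfrak{a})$, and $c_{X/G}(E') = \pi(c_X(E)) \ni x'$, as required.

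For the reverse inequality, I start with a prime divisor $E'$ over $X/G$ realized on $W \to X/G$ with $c_{X/G}(E') \ni x'$. Lemma \ref{lem:normalization} supplies a $G$-equivariant model $W' \to X$ with $W'/G \simeq W$, and virtual freeness again forces the $G$-action on $W'$ to be free in codimension one. I pick any prime divisor $E$ on $W'$ lying over $E'$ (which exists since $W' \to W$ is finite surjective); the same computation as above gives $a_E(X, \mathfrak{a}\mathcal{O}_X) = a_{E'}(X/G, \mathfrak{a})$. To arrange the center condition, note $\pi(c_X(E)) = c_{X/G}(E') \ni x'$, so $c_X(E)$ meets $\pi^{-1}(x') = G \cdot x$; replacing $E$ by a suitable $G$-translate (which does not change the log discrepancy since $\mathfrak{a}\mathcal{O}_X$ is $G$-invariant) achieves $c_X(E) \ni x$.

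The main technical obstacle is certifying the relation $K_Z = \pi_Z^* K_{Z/H}$ on the higher models, which reduces to the étale-in-codimension-one property of the quotient; this is precisely what virtual freeness provides, since by definition it guarantees that every $G$-equivariant proper birational modification has $G$-action free in codimension one, whether produced by Lemma \ref{lem:equiv_resol} or by Lemma \ref{lem:normalization}. The remainder of the argument is routine coefficient bookkeeping together with a $G$-translation step exploiting the $G$-invariance of $\mathfrak{a}\mathcal{O}_X$.
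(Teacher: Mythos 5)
Your proposal is correct and takes essentially the same approach as the paper's proof: use Lemma~\ref{lem:equiv_resol} and Lemma~\ref{lem:normalization} to pass between $G$-equivariant models over $X$ and models over $X/G$, and exploit virtual freeness to make every relevant quotient map \'etale in codimension one so that log discrepancies match term by term. You spell out the coefficient comparison and the $G$-translation needed for the center condition, both of which are left implicit in the paper's terser proof; the mathematical content is the same.
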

\begin{proof}
Let $Y \to X$ be any $G$-equivariant proper birational morphism from a normal variety $Y$. 
Let $E$ be a prime divisor on $Y$, and $F$ the image of $E$ on $Y/G$. 
As stated in Remark \ref{rmk:between}, $G$ acts freely on both $X$ and $Y$ in codimension one. 
Since $X \to X/G$ and $Y \to Y/G$ are \'{e}tale in codimension one, it follows that $a_E(X, \mathfrak{a}\mathcal{O}_X) = a_F(X/G, \mathfrak{a})$ (cf.\ \cite{Kol13}*{2.42.4}). 

Let $E$ be a divisor over $X$. 
By Lemma \ref{lem:equiv_resol}, there exists a $G$-equivariant proper birational morphism $Z \to X$ from a normal variety $Z$ such that $E$ is a prime divisor on $Z$. 
Let $F$ be the image of $E$ on $Z/G$. 
Then, by the conclusion from the first paragraph of this proof, we have 
\[
a_E (X, \mathfrak{a}\mathcal{O}_X) = a_F(X/G, \mathfrak{a}) \ge \operatorname{mld}_{x'}(X/G, \mathfrak{a}). 
\]
Therefore, we have $\operatorname{mld}_{x}(X, \mathfrak{a}\mathcal{O}_X) \ge \operatorname{mld}_{x'}(X/G, \mathfrak{a})$.

Conversely, let $F$ be a divisor over $X/G$, 
and suppose that $F$ appears as a divisor on some normal variety $W$. 
By Lemma \ref{lem:normalization}, there exists a $G$-equivariant proper birational morphism $W' \to X$ from a normal variety $W'$ such that $W'/G \simeq W$.
Take a divisor $E$ on $W'$ whose image on $W$ is $F$. 
Then, again by the conclusion from the first paragraph of this proof, we have 
\[
a_F(X/G, \mathfrak{a}) = a_E(X, \mathfrak{a}\mathcal{O}_X) \ge \operatorname{mld}_{x}(X, \mathfrak{a}\mathcal{O}_X). 
\]
Therefore, we have $\operatorname{mld}_{x'}(X/G, \mathfrak{a}) \ge \operatorname{mld}_{x}(X, \mathfrak{a}\mathcal{O}_X)$.
\end{proof}

\begin{lem}\label{lem:model_change}
Let $G$ be a group. Let $f_1: Y_1 \to X$ and $f_2: Y_2 \to X$ be two $G$-equivariant proper birational morphisms between normal varieties $X$, $Y_1$ and $Y_2$ with $G$-actions. 
Suppose that prime divisors $E_1$ on $Y_1$ and $E_2$ on $Y_2$ are the strict transforms of each other (by the maps $f_2 ^{-1} \circ f_1$ and $f_1 ^{-1} \circ f_2$). 
Then, for $\gamma \in G$, $E_1$ is point-wise fixed by $\gamma$ if and only if $E_2$ is point-wise fixed by $\gamma$. 
\end{lem}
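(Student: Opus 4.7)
The underlying idea is that being point-wise fixed by $\gamma$ is encoded entirely by the divisorial valuation on $k(X)$ associated with $E_i$ together with the $\gamma$-action on $k(X)$, and hence does not depend on the birational model $Y_i$. Since $E_1$ and $E_2$ are strict transforms of each other, they define the same valuation, and the equivalence will follow.

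I would first prove the following reformulation: for a prime divisor $E$ on a normal $G$-variety $Y$, $E$ is point-wise fixed by $\gamma$ if and only if (a) $\gamma(E) = E$ as a closed subset and (b) the induced action of $\gamma$ on the function field $k(E)$ is trivial. The $(\Rightarrow)$ direction is immediate. For $(\Leftarrow)$, condition (a) makes $\gamma|_E \colon E \to E$ an automorphism of $E$ fixing the generic point $\eta$, so I may pick affine opens $U \subset E$ and $V \subset U \cap \gamma^{-1}(U)$ both containing $\eta$. By (b), the pullback $\gamma|_V^* \colon \Gamma(U, \mathcal{O}_E) \to \Gamma(V, \mathcal{O}_E)$ and the natural restriction agree as maps into $k(E)$, hence coincide. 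Thus $\gamma|_V$ equals the inclusion $V \hookrightarrow E$, and since $V$ is dense and $E$ is separated, $\gamma|_E = \operatorname{id}_E$.

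Next, since $E_1$ and $E_2$ are strict transforms of each other, they define the same divisorial valuation $v$ on the common function field $k(X) = k(Y_1) = k(Y_2)$. Letting $R_v \subset k(X)$ denote the valuation ring and $\kappa(v)$ its residue field, we obtain canonical identifications
\[
\mathcal{O}_{Y_1, E_1} = R_v = \mathcal{O}_{Y_2, E_2}, \qquad k(E_1) = \kappa(v) = k(E_2).
\]
Since $f_1$ and $f_2$ are $G$-equivariant, the $\gamma$-action on $k(Y_i)$ coincides with the given $\gamma$-action on $k(X)$ for $i = 1, 2$. Under these identifications, condition (a) becomes ``$\gamma \cdot R_v = R_v$'', and given this, condition (b) becomes ``$\gamma$ acts trivially on $\kappa(v)$''; both are intrinsic to $v$ and the $\gamma$-action on $k(X)$, hence independent of $i$.

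The main obstacle is the first step, the reformulation of point-wise fixation in terms of the function field action; the rest is essentially bookkeeping with valuations once the problem has been made model-free.
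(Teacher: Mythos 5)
Your proof is correct, but it takes a genuinely different route from the paper's. The paper passes to a normal $G$-variety $W$ with $G$-equivariant proper birational morphisms to both $Y_1$ and $Y_2$, reduces to the case where $f_2^{-1}\circ f_1$ is an honest morphism, and then uses that a proper birational morphism between normal varieties is an isomorphism over the generic point of any prime divisor on the target; the equivalence then follows from density. Your argument instead isolates and proves a clean intrinsic reformulation: $E$ is point-wise fixed by $\gamma$ if and only if $\gamma$ stabilizes the associated valuation ring $R_v \subset k(X)$ and acts trivially on its residue field $\kappa(v)$. Since both conditions depend only on the divisorial valuation $v$ and the $\gamma$-action on $k(X)$ (which is the same for $Y_1$ and $Y_2$ by $G$-equivariance), model-independence is immediate. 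Both proofs ultimately turn on the same core fact — the generic point of $E_i$ and its local ring determine everything, and density then propagates the conclusion — but you make this completely explicit via a valuation-theoretic characterization that the paper leaves implicit, at the cost of a somewhat longer write-up; the paper's version is shorter because the standard common-model trick does the bookkeeping for you.
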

\begin{proof}
We can take a normal variety $W$ with a $G$-action equipped with $G$-equivariant proper birational morphisms $W \to Y_1$ and $W \to Y_2$ that are compatible with $f_1$ and $f_2$. 
Therefore, we may assume that $f_2 ^{-1} \circ f_1 : Y_1 \dasharrow Y_2$ is a morphism from the beginning. 
Since $Y_2$ is normal, the morphism $f_2 ^{-1} \circ f_1$ is an isomorphism at the generic point of $E_2$. 
As the set of $\gamma$-fixed points is closed, if an open subset of the divisor $E_i$ (for $i = 1, 2$) is point-wise fixed by $\gamma$, then all points of $E_i$ are point-wise fixed by $\gamma$.
This completes the proof. 
\end{proof}

\begin{lem}\label{lem:wb}
Let $W$ be a variety of dimension $N$ and let $x \in W$ be a closed point. 
Suppose that $W$ is smooth at $x$. 
Let $\gamma : W \to W$ be an automorphism of $W$ with finite order $d$. 
Suppose that $\gamma$ fixes $x$. 
Let $\xi \in k$ be a primitive $d$th root of unity. 
\begin{enumerate}
\item
Then, there exist a regular system of parameters $f_1, \ldots , f_{N} \in \mathcal{O}_{W, x}$ and $a_1, \ldots , a_N \in \{ 1, 2 , \ldots , d \}$ such that 
$\gamma ^* (f_i) = \xi ^{a_i}  f_i$ for each $1 \le i \le N$. 

\item
Let $W'$ be the weighted blow-up of $W$ at $x$ with respect to the regular system of parameters $f_1, \ldots , f_{N}$ and the weight $(a_1, \ldots, a_N)$ in (1). 
That is, $W'=\operatorname{Proj}_W \bigoplus_{\ell \in \mathbb{Z}_{\ge 0}} \mathcal{I}_{\ell} $, where $\mathcal{I}_{\ell}$ is the ideal on $W$ generated by all monomials $f_1^{c_1}\cdots f_{N}^{c_N}$ satisfying $\sum_{i=1}^N a_i c_i \ge \ell$.
Then, the automorphism $\gamma$ on $W$ extends to $W'$ and the exceptional divisor of $W' \to W$ is point-wise fixed by $\gamma$. 
\end{enumerate}
\end{lem}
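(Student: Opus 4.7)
My plan for part (1) is the standard linearize-and-average argument at the fixed point. Since $\gamma$ fixes $x$, it acts $k$-linearly on the cotangent space $V := \mathfrak{m}_x / \mathfrak{m}_x^2$ with order dividing $d$, and because $k$ is algebraically closed of characteristic zero, every finite-order linear endomorphism is diagonalizable. Pick a basis $\bar{f}_1, \ldots, \bar{f}_N$ of $V$ consisting of eigenvectors with $\gamma^* \bar{f}_i = \xi^{a_i} \bar{f}_i$ for suitable $a_i \in \{1, \ldots, d\}$. Lift each $\bar{f}_i$ to an arbitrary $\tilde{f}_i \in \mathfrak{m}_x$ and apply the Reynolds-type averaging
\[
f_i := \frac{1}{d} \sum_{j=0}^{d-1} \xi^{-a_i j} (\gamma^j)^*(\tilde{f}_i).
\]
A direct computation gives $\gamma^*(f_i) = \xi^{a_i} f_i$ (using $\xi^d = 1$ and $\gamma^d = \operatorname{id}$) and $f_i \equiv \tilde{f}_i \pmod{\mathfrak{m}_x^2}$ (since each summand reduces to $\tilde{f}_i$ modulo $\mathfrak{m}_x^2$), so $f_1, \ldots, f_N$ remain a regular system of parameters by Nakayama's lemma.

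For part (2), the extension of $\gamma$ to $W'$ follows easily from (1): since $\gamma^*(f_1^{c_1} \cdots f_N^{c_N}) = \xi^{\sum_i a_i c_i} f_1^{c_1} \cdots f_N^{c_N}$, each ideal $\mathcal{I}_\ell$ is $\gamma$-stable, and by functoriality of $\operatorname{Proj}$ the action of $\gamma$ lifts to an automorphism of $W' = \operatorname{Proj}_W \bigoplus_\ell \mathcal{I}_\ell$ commuting with $W' \to W$.

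To prove that the exceptional divisor $E$ is fixed pointwise, I would use the toric description of the weighted blow-up in a neighborhood of $x$. Since $W$ is smooth at $x$ with regular parameters $f_1, \ldots, f_N$, the morphism $W' \to W$ locally around $x$ coincides with the toric weighted blow-up of $\mathbb{A}^N = \operatorname{Spec} k[f_1, \ldots, f_N]$ at the origin with weights $(a_1, \ldots, a_N)$. Its standard toric affine cover consists of $N$ charts $U_i$ (one per maximal cone of the subdivided fan), where $\mathcal{O}(U_i)$ is generated by monomials $f^{\mathbf m} := f_1^{m_1} \cdots f_N^{m_N}$ satisfying $m_j \ge 0$ for all $j \ne i$ and $\sum_j a_j m_j \ge 0$. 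In this chart, $E$ is the closed subscheme cut out by the monomials with $\sum_j a_j m_j > 0$, so $\mathcal{O}(E \cap U_i)$ is generated by the monomials with $\sum_j a_j m_j = 0$. Since $\gamma^*(f^{\mathbf m}) = \xi^{\sum_j a_j m_j} f^{\mathbf m}$, every such generator is $\gamma$-invariant, so $\gamma$ restricts to the identity on $E \cap U_i$. As the $U_i$ cover $E$, this completes the proof.

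I expect the main technical hurdle to be the clean identification of the local picture of $W' \to W$ with the standard toric model (particularly when the weights $(a_1, \ldots, a_N)$ share common factors, so the toric chart is not a polynomial ring). Once that identification is in place, the crucial observation is elementary: the $\gamma$-weight of each monomial $f^{\mathbf m}$ equals $\sum_j a_j m_j$, which is exactly the equation defining $E$ in each chart, so all monomials tangent to $E$ are automatically $\gamma$-fixed.
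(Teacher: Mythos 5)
Your proof is correct and takes essentially the same route as the paper: for (1) the diagonalize-on-$\mathfrak{m}_x/\mathfrak{m}_x^2$-and-average argument (which is exactly what the paper's cited reference does), and for (2) reading off that the $\gamma$-weight of a monomial generator is $\sum_j a_j m_j$, so that weight-zero monomials are $\gamma$-invariant while positive-weight monomials vanish along $E$.

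The one place you flag a hurdle --- identifying $W' \to W$ with a ``standard toric model'' --- is avoidable and the paper avoids it: it works directly with the chart $D_+(f_i) \simeq \operatorname{Spec} S_{(f_i)}$ of the Proj construction as defined in the statement, noting that $S_{(f_i)}$ is generated as an $\mathcal{O}_W$-algebra by exactly the monomials $f^{\mathbf m}$ you list. No toric machinery is invoked, so the common-factor issue you worry about never arises. The paper also supplies the one step you assert without justification, namely that each monomial $h$ with $\sum_j a_j m_j > 0$ vanishes at every closed point of $E$ in the chart: it observes that $h^{\max\{a_1,\dots,a_N\}} \in (f_1,\dots,f_N)\,S_{(f_i)} = \mathfrak{m}_x S_{(f_i)}$, so $\varphi(h)=0$ for any $k$-point $\varphi$ lying over $x$ (as $k$ is reduced). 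That one line is what makes your ``$E$ is cut out by the positive-weight monomials'' claim rigorous at the level of closed points, which is all that is needed; with it in place your argument goes through unchanged.
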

\begin{proof}
According to the proof of \cite{Kaw24}*{Theorem 2.2.4} (see also \cite{Kaw24}*{Remark 2.2.5}), we can choose a regular system of parameters in $\mathcal{O}_{W, x}$ with respect to which the action of $\gamma$ is linear. 
Then, $\gamma$ corresponds to an element of $\operatorname{GL}_N(k)$ with finite order. 
Since any such element is diagonalizable, (1) holds. 

Next, we prove (2). 
Since $\gamma$ acts on each $\mathcal{I}_{\ell}$, 
it induces an action on $W'=\operatorname{Proj}_W \bigoplus_{\ell \in \mathbb{Z}_{\ge 0}} \mathcal{I}_{\ell}$. 
Next, we show that the exceptional divisor of $W' \to W$ is point-wise fixed by $\gamma$. 
For $\ell \in \mathbb Z_{\ge 0}$, each $\mathcal{I}_{\ell}/\mathcal{I}_{\ell+1}$ is generated over $k$ by the monomials of the form $h = f_1^{b_1} \cdots f_N^{b_N}$ satisfying $a_1 b_1 + \cdots + a_N b_N = \ell$.
Hence, for any $\ell \in \mathbb{Z}_{\ge 0}$ and any such $h \in \mathcal{I}_{d\ell}/\mathcal{I}_{d\ell+1}$, we have $\gamma^*(h) = h$.
Therefore, $\gamma$ acts trivially on $\bigoplus_{\ell \in \mathbb{Z}_{\ge 0}} \mathcal{I}_{d\ell}/\mathcal{I}_{d\ell+1}$.
Since the exceptional divisor of $W' \to W$ is given by
$\operatorname{Proj}_W \bigoplus_{\ell \in \mathbb{Z}_{\ge 0}} \left(\mathcal{I}_{\ell}/\mathcal{I}_{\ell+1}\right) \simeq \operatorname{Proj}_W \bigoplus_{\ell \in \mathbb{Z}_{\ge 0}} \left(\mathcal{I}_{d\ell}/\mathcal{I}_{d\ell+1}\right)$, 
it is point-wise fixed by $\gamma$.
\end{proof}

\begin{prop}\label{prop:char}
Suppose that a finite group $G$ acts on a normal variety $X$. 
Then, the following conditions are equivalent. 
\begin{enumerate}
\item 
The $G$-action on $X$ is virtually free. 

\item
There exists a $G$-equivariant proper birational morphism $Y \to X$ from a normal variety $Y$ such that the $G$-action on $Y$ is free.  

\item
For any $G$-equivariant proper birational morphism $Y \to X$ from a smooth variety $Y$, the $G$-action on $Y$ is free. 
\end{enumerate}
\end{prop}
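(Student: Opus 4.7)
The plan is to prove the equivalences in the order $(3) \Rightarrow (2) \Rightarrow (1) \Rightarrow (3)$. The first two implications are essentially formal once the right common dominating model is chosen, whereas $(1) \Rightarrow (3)$ is the main obstacle: we need to produce, from a single fixed point, a $G$-equivariantly accessible prime divisor that is point-wise fixed, even though the weighted blow-up of Lemma \ref{lem:wb} is only $\langle \gamma \rangle$-equivariant.

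For $(3) \Rightarrow (2)$, I would simply invoke $G$-equivariant resolution of singularities in characteristic zero: this produces at least one smooth $G$-equivariant proper birational $Y \to X$, and (3) then guarantees that the $G$-action on $Y$ is free.

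For $(2) \Rightarrow (1)$, let $Y \to X$ be the morphism supplied by (2), with $G$ acting freely on $Y$. Given any other $G$-equivariant proper birational $Z \to X$ with $Z$ normal, take the normalization $W$ of the closure of the graph in $Y \times_X Z$; this is a normal variety admitting $G$-equivariant proper birational morphisms to both $Y$ and $Z$. The inclusion of stabilizers $G_w \subset G_{\pi(w)} = \{e\}$ under $\pi : W \to Y$ forces the $G$-action on $W$ to be free. If some $\gamma \neq e$ were to fix a prime divisor $D$ on $Z$ point-wise, then since $W \to Z$ is an isomorphism at the generic point of $D$ (both are normal), the strict transform $D'$ of $D$ would be a prime divisor on $W$ fixed point-wise by $\gamma$ (by Lemma \ref{lem:model_change} applied to the identity map on $W$, or directly), contradicting freeness on $W$.

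For $(1) \Rightarrow (3)$, I would argue by contrapositive: assume $Y \to X$ is a $G$-equivariant proper birational morphism with $Y$ smooth, and that some $\gamma \in G$, $\gamma \neq 1$, fixes a closed point $y \in Y$. Apply Lemma \ref{lem:wb} to the single automorphism $\gamma$ on (a neighborhood of $y$ in) $Y$ to obtain a weighted blow-up $\widetilde{Y} \to Y$ whose exceptional divisor $E$ is point-wise fixed by $\gamma$. Then $E$ is a divisor over $X$, and by Lemma \ref{lem:equiv_resol} there exists a $G$-equivariant proper birational morphism $Y' \to X$ from a normal variety $Y'$ such that the strict transform of $E$ is a prime divisor $E'$ on $Y'$. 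By Lemma \ref{lem:model_change}, $\gamma$ fixes $E'$ point-wise as well, so $Y' \to X$ is a $G$-equivariant model on which the $G$-action is not free in codimension one, violating (1).

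The only delicate point is the last implication, and the key observation is that Lemma \ref{lem:wb} does not need to be upgraded to a $G$-equivariant blow-up: once we have produced the divisor $E$ over $Y$, Lemma \ref{lem:equiv_resol} (which blows up the full $G$-orbit of the center and normalizes) re-extracts it on a genuinely $G$-equivariant model, and Lemma \ref{lem:model_change} guarantees that the point-wise fixedness by $\gamma$ is model-independent.
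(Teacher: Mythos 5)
Your proposal is correct and follows essentially the same strategy as the paper: $G$-equivariant resolution for $(3) \Rightarrow (2)$, comparison through a common dominating model together with the fact that pointwise fixedness of a divisor is a birational-model-independent notion for $(2) \Rightarrow (1)$, and the combination of Lemma \ref{lem:wb}, Lemma \ref{lem:equiv_resol}, and Lemma \ref{lem:model_change} for $(1) \Rightarrow (3)$, with the key observation (which you correctly isolate) that the weighted blow-up need only be $\langle\gamma\rangle$-equivariant since the divisor it produces can be re-extracted on a genuinely $G$-equivariant model. The only cosmetic difference is that you argue $(2) \Rightarrow (1)$ directly via inclusion of stabilizers along $W \to Y$, whereas the paper argues the contrapositive by transporting a $\gamma$-fixed point from $W$ to an arbitrary $Y'$; these are logically equivalent formulations of the same idea.
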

\begin{proof}
Since we can always take a $G$-equivariant resolution of $X$, the implication $(3) \Rightarrow (2)$ is clear. 

We prove $(2) \Rightarrow (1)$. 
Suppose that (1) does not hold. 
Then, there exist a $G$-equivariant proper birational morphism $Y \to X$ from a normal variety $Y$ 
and a prime divisor $E$ on $Y$ such that $E$ is point-wise fixed by some $\gamma \in G \setminus \{ 1_G \}$. 
Let $Y' \to X$ be any $G$-equivariant proper birational morphism from a normal variety $Y'$. 
We shall prove that the $G$-action on $Y'$ is not free.  
We take a normal variety $W$ with a $G$-action equipped with $G$-equivariant proper birational morphisms $W \to Y$ and $W \to Y'$ that commute with $Y \to X$ and $Y' \to X$. 
Since $W \to Y$ is proper and $Y$ is normal, $W \to Y$ is isomorphic outside a codimension two closed subset of $Y$. 
Therefore, the strict transform $E_W$ of $E$ on $W$ is point-wise fixed by $\gamma$ (Lemma \ref{lem:model_change}). 
Since $W$ has a $\gamma$-fixed point, we conclude that $Y'$ also has a $\gamma$-fixed point, which proves that the $G$-action on $Y'$ is not free. 

We prove $(1) \Rightarrow (3)$.
Suppose that (3) does not hold. 
Then, there exists a $G$-equivariant proper birational morphism $Y \to X$ from a smooth variety $Y$ such that 
some closed point $y \in Y$ is fixed by some $\gamma \in G \setminus \{ 1_G \}$. 
Let $Y' \to Y$ be the weighted blow-up corresponding to the action of $\gamma$ as in Lemma \ref{lem:wb}. 
Then, the $\gamma$-action on $Y$ extends to $Y'$ and the exceptional divisor $E$ is point-wise fixed by $\gamma$. 
Note here that the entire $G$-action on $Y$ does not necessarily lift to $Y'$. 
However, by Lemma \ref{lem:equiv_resol}, we can take a $G$-equivariant proper birational morphism $W \to X$ with a prime divisor $E_W$ on $W$ such that $E_W$ is the strict transform of $E$. 
By Lemma \ref{lem:model_change}, $E_W$ is also point-wise fixed by $\gamma$. 
Therefore, the $G$-action on $W$ is not free in codimension one, 
and we conclude that the $G$-action on $X$ is not virtually free. 
\end{proof}

\begin{thm}\label{thm:prod}
Let $G$ be a finite group, and let $X$ be a normal variety with a virtually free $G$-action. 
Then, for any normal variety $Z$ with any $G$-action, the induced $G$-action on $X \times Z$ is virtually free. 
\end{thm}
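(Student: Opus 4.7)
The plan is to reduce to the equivalent characterization $(2)$ of Proposition \ref{prop:char}: it suffices to exhibit a single $G$-equivariant proper birational morphism to $X \times Z$ from a normal variety carrying a free $G$-action. So first I would apply the implication $(1) \Rightarrow (2)$ of Proposition \ref{prop:char} to the hypothesis on $X$, obtaining a $G$-equivariant proper birational morphism $f : Y \to X$ with $Y$ normal and the $G$-action on $Y$ free.

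Next I would form the product $Y \times Z$ over $k$ and equip it with the diagonal $G$-action $g \cdot (y,z) = (g \cdot y, g \cdot z)$. The natural morphism $f \times \operatorname{id}_Z : Y \times Z \to X \times Z$ is then $G$-equivariant by construction, and it is proper because properness is stable under base change along $Z \to \operatorname{Spec}(k)$. It is birational because if $f$ is an isomorphism over a dense open $U \subset X$, then $f \times \operatorname{id}_Z$ is an isomorphism over the dense open $U \times Z \subset X \times Z$. Freeness of the diagonal $G$-action on $Y \times Z$ is automatic from freeness on $Y$: if $\gamma \in G$ fixes some $(y,z)$, then $\gamma$ fixes $y$, so $\gamma = 1_G$. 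Applying Proposition \ref{prop:char} $(2) \Rightarrow (1)$ to this witness then gives that the diagonal $G$-action on $X \times Z$ is virtually free.

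The one point requiring some care is verifying that $Y \times Z$ is a normal variety in the sense of Section \ref{subsection:notation}, that is, an integral, separated, finite-type scheme over $k$ that is normal. Separatedness and finite type are inherited from the two factors. For integrality and normality, the relevant fact is that since $k$ is algebraically closed of characteristic zero (hence perfect), every normal (resp.\ integral) $k$-variety is automatically geometrically normal (resp.\ geometrically integral); consequently $Y \times Z$ is geometrically integral and geometrically normal, and in particular an integral normal $k$-scheme.

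I do not anticipate any real obstacle here: the argument is essentially formal once Proposition \ref{prop:char} is in hand, since virtual freeness was designed precisely so as to have the existence form (2), which is manifestly stable under the operation ``take product with $Z$''. The only input beyond bookkeeping is the standard fact that fiber products of normal varieties over an algebraically closed field of characteristic zero remain integral and normal; so if there is any minor friction in writing the proof, it will be only in spelling out this normality verification.
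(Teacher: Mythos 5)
Your proof is correct and is exactly the argument the paper intends: the paper's proof is a one-line appeal to characterization (2) of Proposition \ref{prop:char}, and you have simply made explicit the witness $f \times \operatorname{id}_Z : Y \times Z \to X \times Z$ together with the routine verifications (properness, birationality, freeness, and normality of $Y \times Z$ over a perfect field). No discrepancy to report.
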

\begin{proof}
By Proposition \ref{prop:char}, there exists a $G$-equivariant proper birational morphism $f: Y \to X$ from a normal variety $Y$ such that the $G$-action on $Y$ is free.
Then the morphism $f \times \mathrm{id}_Z: Y \times Z \to X \times Z$ is also a $G$-equivariant proper birational morphism. 
Moreover, since $Y$ is normal and the $G$-action on $Y$ is free, 
the product $Y \times Z$ is a normal variety and the $G$-action on $Y \times Z$ is free.
Therefore, by Proposition \ref{prop:char} again, the $G$-action on $X \times Z$ is virtually free.
\end{proof}

\begin{thm}\label{thm:kltcase}
Suppose that a finite group $G$ acts on a klt variety $X$.
If the $G$-action on $X$ is virtually free, then this action is free. 
\end{thm}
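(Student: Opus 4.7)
The plan is to argue by contradiction using the fact that klt varieties have rational singularities, together with the holomorphic Lefschetz fixed point formula applied on the fiber of an equivariant resolution over the hypothetical fixed point. Suppose, contrary to the theorem, that the $G$-action on $X$ is not free; then some $\gamma \in G \setminus \{1_G\}$ of finite order $d \ge 2$ fixes a closed point $x \in X$. I aim to derive a contradiction on the fiber over $x$ of a $G$-equivariant log resolution.

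First, I would take a $G$-equivariant log resolution $\pi \colon Y \to X$ (which exists by equivariant Hironaka). By the implication $(1) \Rightarrow (3)$ of Proposition \ref{prop:char}, the virtual freeness of the $G$-action forces the $G$-action on $Y$ to be free, so $\gamma$ acts on $Y$ without fixed points. Since $\gamma x = x$, this action preserves the reduced fiber $F := \pi^{-1}(x)_{\mathrm{red}}$, and $\gamma |_F$ is a finite-order automorphism of $F$ without fixed points; moreover $F$ is connected by Zariski's connectedness theorem.

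Next, I would bring in the klt hypothesis through rational singularities: since $X$ is klt, $\pi_* \mathcal{O}_Y = \mathcal{O}_X$ and $R^i \pi_* \mathcal{O}_Y = 0$ for $i > 0$. Combined with the theorem of formal functions and a Mayer-Vietoris analysis on the SNC structure of $F$, one deduces $H^0(F, \mathcal{O}_F) = k$ and $H^i(F, \mathcal{O}_F) = 0$ for $i > 0$. Since $\gamma^*$ acts trivially on the constants, the holomorphic Lefschetz number equals
\[
L(\gamma, \mathcal{O}_F) \;=\; \sum_{i \ge 0} (-1)^i \operatorname{tr}\bigl(\gamma^* \mid H^i(F, \mathcal{O}_F)\bigr) \;=\; 1.
\]
On the other hand, applying the holomorphic Lefschetz fixed point formula to the fixed-point-free action of $\gamma$ on the proper scheme $F$ yields $L(\gamma, \mathcal{O}_F) = 0$, a contradiction that proves the theorem.

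The main obstacle will be justifying the two technical ingredients in the possibly singular and reducible setting of $F$: the vanishing $H^i(F, \mathcal{O}_F) = 0$ for $i > 0$ on the reduced fiber itself (rather than merely on its formal completion, where it follows immediately from rational singularities), and the validity of the holomorphic Lefschetz fixed point formula for a free finite-order action on such an $F$. These should follow respectively from a careful Mayer-Vietoris argument on the SNC decomposition of $F$ and from equivariant Grothendieck-Riemann-Roch of Baum-Fulton-MacPherson type; an alternative is to transfer the question to the quotient $F/\langle \gamma \rangle$, which is the fiber of a smooth resolution of the klt quotient $X/\langle \gamma \rangle$, and invoke simple-connectedness of rationally chain connected fibers of klt resolutions to rule out the étale cover $F \to F/\langle \gamma \rangle$ of degree $d > 1$.
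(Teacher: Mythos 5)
Your primary (Lefschetz) approach is genuinely different from the paper's, but the two technical gaps you flag at the end are more serious than you suggest. The vanishing $H^i(F,\mathcal{O}_F)=0$ for $i>0$ on the \emph{reduced} fiber does not follow from ``a careful Mayer--Vietoris argument on the SNC decomposition of $F$''; the theorem of formal functions only controls the inverse limit over infinitesimal thickenings, and passing to the reduced fiber is exactly where the difficulty lies. The right tool is Du Bois theory (klt $\Rightarrow$ rational $\Rightarrow$ Du Bois by Koll\'ar--Kov\'acs, together with the Kov\'acs--Schwede style blow-up triangle for $\underline{\Omega}^0$), and even that gives the statement cleanly only when $x$ is an isolated singular point, so additional work is needed in general. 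Similarly, the fixed-point-free Lefschetz vanishing $L(\gamma,\mathcal{O}_F)=0$ on a singular, reducible proper $F$ does not follow from ``equivariant GRR of Baum--Fulton--MacPherson type'' as stated; one needs either Thomason's equivariant localization for singular proper schemes, or a direct argument decomposing $p_*\mathcal{O}_F$ over $F/\langle\gamma\rangle$ into torsion eigen--line bundles and invoking singular Riemann--Roch to equate their Euler characteristics. As written, the primary route is not a complete proof.

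Your final sentence, however, is essentially the paper's actual argument. The paper takes any $G$-equivariant proper birational $Y\to X$, passes to a resolution $W\to Y/G$ with $W'$ its normalization in $k(Y)$, notes that $X/G$ is klt, and then invokes Takayama's theorem \cite{Tak03} on local simple connectedness of resolutions of klt singularities to conclude that the fiber over the image of $x$ is simply connected; this rules out the nontrivial connected \'etale cover coming from a hypothetically free action, forcing a $\gamma$-fixed point, and hence (by Proposition \ref{prop:char}) the action cannot be virtually free. Your version of this---take a $G$-equivariant resolution $Y$, note the action on $Y$ is free by Proposition \ref{prop:char}(3), observe $Y/\langle\gamma\rangle\to X/\langle\gamma\rangle$ is a smooth resolution of a klt variety, and rule out the degree-$d$ \'etale cover $F\to F/\langle\gamma\rangle$ of the simply connected fiber---is a clean and arguably slightly more direct formulation of the same idea. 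The only caveat is the citation: you reach simple connectedness via rational chain connectedness of fibers (Hacon--Mckernan plus Koll\'ar's $\pi_1$-result), whereas the paper cites Takayama directly; both are valid. I would recommend promoting this ``alternative'' to be the main argument and dropping the Lefschetz route, which requires substantially more machinery to make rigorous.
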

\begin{proof}
By the usual Lefschetz principle, we may reduce to the case where $k = \mathbb{C}$. 
This reduction is necessary in order to apply the results of \cite{Tak03} and to use analytic methods in the argument below. 

Suppose that the action is not free. 
Then, there exists a closed point $x \in X$ fixed by some $\gamma \in G \setminus \{ 1_G \}$. 
Let $Y \to X$ be any $G$-equivariant proper birational morphism from a normal variety $Y$. 
By Proposition \ref{prop:char}, it is sufficient to prove that the $G$-action on $Y$ is not free. 
Take a resolution $W \to Y/ \langle \gamma \rangle$ of the singularities of $Y/ \langle \gamma \rangle$.  
Let $W'$ be the normalization of $W$ in the field $k(Y)$. Then, $\langle \gamma \rangle$ acts on $W'$ and we have $W'/\langle \gamma \rangle \simeq W$ (Lemma \ref{lem:normalization}). 
Let $f$ and $g$ be the morphisms as in the following diagram. 
\[
\xymatrix{
W' \ar[r] \ar[d] \ar@/^18pt/[rr]^g & Y \ar[r] \ar[d] & X \ar[d] \\
W \ar[r] \ar@/_18pt/[rr]_f & Y/\langle \gamma \rangle \ar[r] & X/\langle \gamma \rangle
}
\]

Since $X$ is klt at $x$, $X/\langle \gamma \rangle$ is also klt at the image $x'$ of $x$. 
Then, it follows that $f^{-1}(x')$ is simply connected, by Takayama's theorem \cite{Tak03}*{Theorem 1.1} 
(and the argument in the first paragraph on page 829 of \cite{Tak03}). 
If the $\langle \gamma \rangle$-action on $g^{-1}(x)$ is free, 
then the covering map $g^{-1}(x) \to f^{-1}(x)$ should be a homeomorphism, which contradicts $\gamma \not = 1 _G$. 
Therefore, the $\langle \gamma \rangle$-action on $g^{-1}(x)$ is not free, and hence the $G$-action on $Y$ is not free. 
\end{proof}

\begin{rmk}
We would like to thank Chen Jiang and an anonymous referee for their valuable comments on a preprint version of this paper, in which they pointed out that Theorem \ref{thm:kltcase} has a purely algebraic proof, as follows. 
Let $Z \to Y$ be a $\langle \gamma \rangle$-equivariant resolution of $Y$, and let $h$
denote the composite map $Z \to Y \to X$. 
By \cite{PS16}*{Corollary 3.7} and the argument in the proof of \cite{PS16}*{Proposition 3.10}, 
one can show that $h^{-1}(x)$ contains a rationally connected $\langle \gamma \rangle$-invariant subvariety $Z'$. By applying the holomorphic Lefschetz fixed-point formula to a $\langle \gamma \rangle$-equivariant resolution of $Z'$, it follows that $Z'$ must have a $\gamma$-fixed point. 
This proves that $Y$ also has a $\gamma$-fixed point. 
\end{rmk}

\section{Proof of Theorem \ref{thm:main}}\label{section:proof}

In this section, we give a proof of Theorem \ref{thm:main}. 
First, we prove a lemma. 

\begin{lem}\label{lem:H}
Let $\xi \in k$ be a primitive third root of unity in $k$. 
Let $H \subset \mathbb{A}_k ^3$ be the hypersurface defined by $x_1 ^3 + x_2 ^3 + x_3 ^3 = 0$. 
Let $\gamma := \operatorname{diag}(1, \xi , \xi ^2) \in \operatorname{GL}_3(k)$. 
Then, the $\langle \gamma \rangle$-action on $H$ is virtually free.
\end{lem}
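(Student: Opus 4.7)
The plan is to apply Proposition \ref{prop:char}(2): to prove virtual freeness of the $\langle \gamma \rangle$-action on $H$, it is enough to exhibit a single $\langle \gamma \rangle$-equivariant proper birational morphism $Y \to H$ from a normal variety $Y$ on which the action is free. My natural candidate is the strict transform $\widetilde{H}$ of $H$ under the blow-up $\pi : Z \to \mathbb{A}_k^3$ at the origin. Since $\gamma = \operatorname{diag}(1, \xi, \xi^2)$ fixes the origin, $\pi$ is $\langle \gamma \rangle$-equivariant, and hence so is $\widetilde{H} \to H$.

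First I would record the fixed loci. Because the $\gamma$-eigenvalues on $x_2, x_3$ are $\xi, \xi^2 \neq 1$, the $\gamma$-fixed locus (and equally the $\gamma^2$-fixed locus) in $\mathbb{A}_k^3$ is the line $\{x_2 = x_3 = 0\}$; intersecting with $H$ forces $x_1^3 = 0$, so the origin is the unique fixed point on $H$. In particular $\langle \gamma \rangle$ already acts freely on $H \setminus \{0\}$, and the entire question concerns what happens over the origin after blowing up.

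Next I would verify smoothness and freeness on $\widetilde{H}$ by a chart-by-chart check in the three standard affine charts $U_1, U_2, U_3$ of $Z$ (where $U_i$ corresponds to $x_i$ generating the exceptional ideal locally). In each chart the strict transform equation is obtained by dividing $x_1^3 + x_2^3 + x_3^3$ by $x_i^3$, giving an equation of the form $1 + (\text{two cubes}) = 0$ in the remaining two ratios; such an equation has everywhere nonzero partial derivatives where it vanishes, so $\widetilde{H} \cap U_i$ is smooth. The $\gamma$-action on $U_i$ is linear and monomial, with the ratio $x_j/x_i$ acquiring eigenvalue $\xi^{a_j - a_i}$ (where $a_1=0, a_2=1, a_3=2$); in each chart the $\gamma$-fixed locus is contained in the vanishing locus of the two non-fixed coordinates, and on that locus the strict transform equation evaluates to the nonzero constant $1$. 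Hence the $\gamma$-fixed locus is disjoint from $\widetilde{H}$, so the action on $\widetilde{H}$ is free. Combining the two points, $\widetilde{H} \to H$ is a $\langle \gamma \rangle$-equivariant proper birational morphism from a smooth variety carrying a free action, and Proposition \ref{prop:char}(2) yields the claim.

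The main obstacle is purely bookkeeping: one must track the eigenvalues of $\gamma$ correctly on each coordinate patch and check that the fixed-coordinate restriction of the strict transform equation is a unit. There is no essential difficulty, reflecting the fact that the singularity of $H$ at the origin is simple elliptic (the affine cone on the Fermat elliptic cubic in $\mathbb{P}^2$), so a single blow-up already resolves it, and the exceptional elliptic curve inherits a free $\mathbb{Z}/3\mathbb{Z}$-action.
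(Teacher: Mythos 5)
Your proof is correct and follows exactly the route the paper takes: blow up at the origin, observe that the strict transform of $H$ carries a free $\langle\gamma\rangle$-action, and invoke Proposition~\ref{prop:char}(2). The paper states this as ``one can see''; your chart-by-chart verification that the ratio coordinates $x_j/x_i$ all have nontrivial $\gamma$-eigenvalues, so that the strict transform equation restricts to the unit $1$ on the fixed locus in every chart, is precisely the computation behind that assertion.
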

\begin{proof}
One can see that the induced $\langle \gamma \rangle$-action on the blow-up $\operatorname{Bl}_0 H$ of $H$ at the origin $(0,0,0)$ is free, as shown by the following argument. 

Let $(x_1, x_2 , x_3), [X_1:X_2:X_3]$ be the coordinate of the blow-up $\operatorname{Bl}_0\mathbb{A}_k ^3$. 
On the open subset $\operatorname{Bl}_0\mathbb{A}_k ^3 \cap \{ X_1 \not = 0 \} \subset \operatorname{Bl}_0\mathbb{A}_k ^3$, 
the $\gamma$-action can be described by 
$\frac{X_2}{X_1} \mapsto \xi \frac{X_2}{X_1}$ and $\frac{X_3}{X_1} \mapsto \xi ^2 \frac{X_3}{X_1}$. 
Therefore, the set of $\gamma$-fixed points of $\operatorname{Bl}_0\mathbb{A}_k ^3 \cap \{ X_1 \not = 0 \}$ 
is the curve defined by $X_2  = X_3 = x_2 = x_3 = 0$. 
On the open subset $\operatorname{Bl}_0\mathbb{A}_k ^3 \cap \{ X_1 \not = 0 \} \subset \operatorname{Bl}_0\mathbb{A}_k ^3$, 
$\operatorname{Bl}_0 H$ is defined by $1 + \left( \frac{X_2}{X_1} \right)^3 + \left( \frac{X_3}{X_1} \right)^3 = 0$. 
Therefore $\operatorname{Bl}_0 H \cap \{ X_1 \not = 0 \}$ has no $\gamma$-fixed point since $\frac{X_2}{X_1}$ or $\frac{X_3}{X_1}$ is non-zero. 
Applying the same argument to the other two open subsets $\operatorname{Bl}_0 H \cap \{ X_2 \not = 0 \}$ and $\operatorname{Bl}_0 H \cap \{ X_3 \not = 0 \}$,
we conclude that $\operatorname{Bl}_0 H$ has no $\gamma$-fixed point. 

Therefore, by Proposition \ref{prop:char}, the $\langle \gamma \rangle$-action on $H$ is virtually free. 
\end{proof}

\begin{proof}[Proof of Theorem \ref{thm:main}]
First, we prove (1). 
By the age formula (Proposition \ref{prop:age}), we have $\operatorname{mld}_{x_0}(A_n) = 2 + \frac{n}{3}$. 
Since $B_n$ is a Cartier divisor passing through $x_0$, we have 
\[
\operatorname{mld}_{x_0}(A_n, B_n) \le \operatorname{mld}_{x_0}(A_n) - 1 = 1 + \frac{n}{3}. 
\]

Next, we prove (2). 
By Lemma \ref{lem:H} and Theorem \ref{thm:prod}, the $G$-action on $\overline{B}_n$ is virtually free. 
Therefore, by Proposition \ref{prop:eq}, we have $\operatorname{mld}_{x_0}(B_n) = \operatorname{mld}_{\overline{x}_0} \bigl( \overline{B}_n \bigr)$. 
Hence, we have
\[
\operatorname{mld}_{x_0}(B_n) 
= \operatorname{mld}_{\overline{x}_0} \bigl( \overline{B}_n \bigr) 
= \operatorname{mld}_{\overline{x}_0} \bigl( \overline{B}_0 \times \mathbb{A}^n _k \bigr)
= \operatorname{mld}_{\overline{x}_0} \bigl( \overline{B}_0 \bigr) + n 
= n. 
\]
Here, the third equality follows from \cite{Amb99}*{Remark 2.7}. 
The fourth equality can be shown as follows, for instance. 
By \cite{EMY03}*{Theorem 0.1}, 
we have $\operatorname{mld}_{\overline{x}_0} \bigl( \overline{B}_0 \bigr) = 
\operatorname{mld}_{\overline{x}_0} \bigl( \mathbb{A}^3 _k, \overline{B}_0 \bigr)$. 
Since the blow-up $\operatorname{Bl}_{x_0} \mathbb{A}^3 _k \to \mathbb{A}^3 _k$ gives a log resolution of $\bigl( \mathbb{A}^3 _k, \overline{B}_0 \bigr)$, we have 
$\operatorname{mld}_{\overline{x}_0} \bigl( \mathbb{A}^3 _k, \overline{B}_0 \bigr) = 
a_E \bigl( \mathbb{A}^3 _k, \overline{B}_0 \bigr) = 0$ for the exceptional divisor $E$. 

(3) and (3') follow from (1) and (2). 
\end{proof}

\begin{rmk}\label{rmk:thin}
\begin{enumerate}
\item
We fix $n$ in Example \ref{eg:main}, and set $B = B_n$ for simplicity. 
In \cite{NS22}, the $k[t]$-scheme $\overline{B}^{(\gamma)}$ is defined by 
\[
\overline{B}^{(\gamma)}
= \operatorname{Spec} \bigl( k[t][x_1, \ldots , x_{n+3}] / (x_1^3 + t x_2 ^3 + t^2 x_3 ^3) \bigr). 
\]
It is easy to see that $x_1^3 + t x_2 ^3 + t^2 x_3 ^3 = 0$ has only the trivial solution $(x_1, x_2, x_3) = (0,0,0)$ in $x_1, x_2, x_3 \in k[[t]]$. 
This shows that the arc space $\overline{B}^{(\gamma)}_{\infty}$ is a thin set of $\overline{B}^{(\gamma)}_{\infty}$ itself (see \cite{NS22}*{Remark 5.3}).  
By \cite{NS22}*{Theorem 4.8}, the equality $\operatorname{mld}_{x_0}(B_n) = \operatorname{mld}_{\overline{x}_0} \bigl( \overline{B}_n \bigr)$  also follows from this fact. 

\item
This phenomenon ``$\overline{B}^{(\gamma)}_{\infty}$ is a thin set of $\overline{B}^{(\gamma)}_{\infty}$ itself'' 
is a pathological phenomenon in the theory of arc space of quotient varieties, 
and this does not happen when starting from a klt variety $B$. 
Actually, if $B$ is klt, \cite{NS22}*{Claim 5.2} shows that $\overline{B}^{(\gamma)}_{\infty}$ is not a thin set of $\overline{B}^{(\gamma)}_{\infty}$. 
In the proof of \cite{NS22}*{Claim 5.2}, the following two deep results are used: 
\begin{itemize}
\item
the result by Hacon and Mckernan \cite{HM07}*{Corollary 1.5}, 
which states the rational chain connectedness of the fibers of the resolution of klt singularities, and 

\item
the result by Graber, Harris, and Starr \cite{GHS03}*{Theorem 1.1}, which states the existence of a section of a morphism with rational chain connected fibers. 
\end{itemize}
The klt assumption in the result (\ref{conj:PIA}.5) in Introduction is necessary to apply these results. 
\end{enumerate}
\end{rmk}

\begin{rmk}
By \cite{Wat74}*{Theorem 1}, $A_3$ is Gorenstein since we have $\gamma \in \operatorname{SL}_6(k)$ in this case. 
Therefore, the PIA conjecture (Conjecture \ref{conj:PIA}) has a counterexample even if we assume that $X$ is Gorenstein. 
\end{rmk}

\section{Examples on semi-continuity}\label{section:LSC}

We give two examples related to the semi-continuity of minimal log discrepancies. 

\begin{eg}\label{eg:family}
Let $\xi \in k$ be a primitive third root of unity. 
We define $R := k[x_1, \ldots , x_{7}]$ and 
\[
g := x_1 ^3 + x_2 ^3 + x_3 ^3 + x_6 x_7 \in R, \qquad
\gamma := \operatorname{diag} (1, \xi, \xi^2, \xi, \xi , 1, 1) \in \operatorname{GL}_{7}(k). 
\]
We define $G := \langle \gamma \rangle = \{ 1, \gamma , \gamma^2 \} \subset \operatorname{GL}_{7}(k)$ and 
$Y := \operatorname{Spec} \bigl (R^G/(g) \bigr)$. 
Let $\varphi: Y \to \mathbb{A}^1_k = \operatorname{Spec} (k[x_7])$ be the $7$th projection. 
For a closed point $t \in \mathbb{A}^1_k$, let $Y_t := \varphi^{-1}(t)$ denote its fiber. 
Let $x_0 \in Y_t$ denote the image of the origin $\overline{x}_0 = (0, 0, \ldots, 0) \in \mathbb{A}^6 _k = \operatorname{Spec}(k[x_1, \ldots, x_6])$. 
Then, we have 
\[
\operatorname{mld}_{x_0}(Y_0) = 3 , \qquad
\operatorname{mld}_{x_0}(Y_t) = \frac{8}{3} \quad (\text{for all $t \not = 0$}), 
\tag{$\diamondsuit$}
\]
as shown by the following argument. 

We have 
\begin{align*}
Y_0 
&= \operatorname{Spec} \bigl (R^G/(g, x_7) \bigr) 
\simeq \operatorname{Spec} \bigl (k[x_1,\ldots, x_6]^G/(x_1 ^3 + x_2 ^3 + x_3 ^3) \bigr) 
\simeq B_2 \times \mathbb{A}^1_k, \\
Y_t 
&= \operatorname{Spec} \bigl (R^G/(g, x_7-t) \bigr) 
\simeq \operatorname{Spec} \bigl (k[x_1,\ldots, x_6]^G/(x_1 ^3 + x_2 ^3 + x_3 ^3+tx_6) \bigr) \\
&\simeq \operatorname{Spec} \bigl( k[x_1, x_2, x_3, x_4, x_5] ^G \bigr), 
\end{align*}
where $B_2$ is the variety defined in Example \ref{eg:main}. 
Therefore, the first assertion of ($\diamondsuit$) follows from Theorem \ref{thm:main} and \cite{Amb99}*{Remark 2.7}. 
The second assertion of ($\diamondsuit$) follows from the age formula (Proposition \ref{prop:age}). 
\end{eg}

\begin{eg}\label{eg:adic}
Let $\xi \in k$ be a primitive third root of unity. 
We define $R := k[x_1, \ldots , x_6]$ and $g, g_n \in R$ by 
\[
g := x_1 ^3 + x_2 ^3 + x_3 ^3 , \quad
g_n := x_1 ^3 + x_2 ^3 + x_3 ^3 + x_6^n \quad (\text{for $n \ge 1$}). 
\]
Let 
$\gamma := \operatorname{diag} (1, \xi, \xi^2, \xi, \xi , 1) \in \operatorname{GL}_{6}(k)$ and 
$G := \langle \gamma \rangle = \{ 1, \gamma , \gamma^2 \} \subset \operatorname{GL}_{6}(k)$. 
We define 
\[
Y := \operatorname{Spec} \bigl (R^G/(g) \bigr), \quad
Y_n :=  \operatorname{Spec} \bigl (R^G/(g_n) \bigr). 
\]
Let $x_0 \in Y, Y_n$ denote the image of the origin $\overline{x}_0 \in \mathbb{A}^6 _k = \operatorname{Spec}(k[x_1, \ldots, x_6])$. 
Then, we have 
\[
\operatorname{mld}_{x_0}(Y) = 3 , \qquad
\operatorname{mld}_{x_0}(Y_n) \le \frac{8}{3} \quad (\text{for all $n \ge 1$}), 
\tag{$\heartsuit$}
\]
as shown by the following argument. 

We have $Y \simeq B_2 \times \mathbb{A}_k ^1$, where $B_2$ is the variety defined in Example \ref{eg:main}. 
Therefore, the first assertion of $(\heartsuit)$ follows from Theorem \ref{thm:main} and \cite{Amb99}*{Remark 2.7}. 
Note that $Y_n$ is klt by \cite{Kol13}*{Corollary 2.43} and \cite{Ish96}*{Corollary 1.7(iii)}. 
Therefore, the PIA conjecture 
\[
\operatorname{mld}_{x_0}(Y_n) 
= \operatorname{mld}_{x_0} \bigl( \mathbb{A}_k ^6/G, Y_n \bigr)
\]
holds by the known result (\ref{conj:PIA}.5) in Introduction (\cite{NS22}*{Theorem 1.4}). 
Therefore, we have 
\[
\operatorname{mld}_{x_0}(Y_n) 
= \operatorname{mld}_{x_0} \bigl( \mathbb{A}_k ^6/G, Y_n \bigr) 
\le \operatorname{mld}_{x_0} \bigl (\mathbb{A}_k ^6/G \bigr) -1 
= \frac{11}{3} - 1 = \frac{8}{3}
\]
by the age formula (Proposition \ref{prop:age}). 
\end{eg}

Example \ref{eg:family} gives a counterexample to the LSC conjecture for family, which was naively believed to be correct.

\begin{conj}[LSC conjecture for families]\label{conj:LSCg}
Let $\varphi: Y \to T$ be a flat morphism, $\mathfrak{a}$ an $\mathbb{R}$-ideal sheaf on $Y$, and 
$\tau: T \to Y$ a section of $\varphi$. 
For a closed point $t \in T$, let $Y_t = \varphi ^{-1} (t)$ denote its fiber, 
and $\mathfrak{a}_t = \mathfrak{a} \mathcal{O}_{Y_t}$ denote the restriction. 
For all $t \in T$, suppose that $\mathfrak{a}_t \not = 0$ and $Y_t$ is a normal $\mathbb{Q}$-Gorenstein variety. 
Then, the function
\[
|T|_{\rm cl} \to \mathbb{R}_{\ge 0} \cup \{ - \infty \}; \quad t \mapsto \operatorname{mld}_{\tau(t)} (Y_t, \mathfrak{a}_t)
\]
is lower semi-continuous, where $|T|_{\rm cl}$ denotes the set of closed points of $T$ with the Zariski topology. 
\end{conj}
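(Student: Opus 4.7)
The plan is to refute Conjecture \ref{conj:LSCg} rather than prove it, using the family $\varphi: Y \to \mathbb{A}^1_k$ from Example \ref{eg:family}, the section $\tau$ sending $t$ to the point $x_0 \in Y_t$, and the trivial ideal sheaf $\mathfrak{a} = \mathcal{O}_Y$. The first step is to confirm that this setup satisfies the hypotheses of the conjecture. Flatness of $\varphi$ holds because $g$ has a non-zero-divisor image in each fiber (the monomial $x_6 x_7$ prevents $g$ from degenerating identically on any $Y_t$); each fiber is normal and $\mathbb{Q}$-Gorenstein, being a finite quotient of a smooth variety or of $\overline{B}_2 \times \mathbb{A}^1_k$; and the section is clear.

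Once the hypotheses are in place, the second step is to compute the two mld values appearing in $(\diamondsuit)$ of Example \ref{eg:family}. For $t = 0$, the identification $Y_0 \simeq B_2 \times \mathbb{A}^1_k$, together with the additivity of mld under product with $\mathbb{A}^1_k$ and Theorem \ref{thm:main}(2), gives $\operatorname{mld}_{x_0}(Y_0) = 2 + 1 = 3$. For $t \neq 0$, one solves $g|_{x_7 = t} = x_1^3 + x_2^3 + x_3^3 + t x_6 = 0$ for $x_6$ to obtain a $G$-equivariant isomorphism $Y_t \simeq \mathbb{A}^5_k / G$, where $G$ now acts via $\operatorname{diag}(1, \xi, \xi^2, \xi, \xi)$; then the age formula \cite{NSs}*{Proposition 2.8} yields $\operatorname{mld}_{x_0}(Y_t) = 1 + \frac{5}{3} = \frac{8}{3}$.

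Since $3 > \frac{8}{3}$, the function $t \mapsto \operatorname{mld}_{\tau(t)}(Y_t)$ jumps upward at the origin and hence fails to be lower semi-continuous, disproving Conjecture \ref{conj:LSCg}. The main point, rather than any technical obstacle, is conceptual: the large value at the special fiber is made possible only because the virtually free $G$-action on $\overline{B}_2$ (established in Section \ref{section:ef}) forces $\operatorname{mld}_{x_0}(B_2) = \operatorname{mld}_{\overline{x}_0}(\overline{B}_2) = 2$, whereas the generic fibers $\mathbb{A}^5_k / G$ are ordinary quotient singularities whose mld is pinned down by the age formula alone. Thus the virtual-freeness phenomenon isolated earlier in the paper is precisely what breaks LSC here.
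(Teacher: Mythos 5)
Your proposal is correct and follows the paper's own treatment: the paper refutes Conjecture \ref{conj:LSCg} by exactly this family from Example \ref{eg:family}, with the identifications $Y_0 \simeq B_2 \times \mathbb{A}^1_k$ (giving $\operatorname{mld}_{x_0}(Y_0) = 3$ via Theorem \ref{thm:main}(2)) and $Y_t \simeq \mathbb{A}^5_k/G$ for $t \neq 0$ (giving $\operatorname{mld}_{x_0}(Y_t) = \frac{8}{3}$ via the age formula), so the jump $3 > \frac{8}{3}$ at $t = 0$ violates lower semi-continuity. Your added remarks on flatness and on the role of virtual freeness are accurate elaborations but do not change the argument.
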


\begin{rmk}
Conjecture \ref{conj:LSCg} is known to be true if $\varphi$ is a smooth morphism. 
More generally, in \cite{Nak16}*{Theorem 3.2}, the first author proves  
Conjecture \ref{conj:LSCg} when $Y_t$ has only quotient singularities for all $t \in T$. 
\end{rmk}

\noindent
Note that the LSC conjecture (in usual form) below is still open although it is a special case of Conjecture \ref{conj:LSCg}.
\begin{conj}[LSC conjecture]\label{conj:LSC}
Let $Y$ be a normal $\mathbb{Q}$-Gorenstein variety and $\mathfrak{a}$ a non-zero $\mathbb{R}$-ideal sheaf on $Y$. 
Then, the function
\[
|Y|_{\rm cl} \to \mathbb{R}_{\ge 0} \cup \{ - \infty \}; \quad y \mapsto \operatorname{mld}_{y} (Y, \mathfrak{a})
\]
is lower semi-continuous. 
\end{conj}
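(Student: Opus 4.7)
The statement above is a longstanding open conjecture, known only in special cases (smooth varieties, locally complete intersections, quotient singularities via \cite{Nak16}, and low dimensions); in full generality no proof is known, so the plan I describe is necessarily tentative.

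My approach would fix a closed point $y_0 \in Y$, set $c := \operatorname{mld}_{y_0}(Y, \mathfrak{a})$, and attempt to show that the locus $\{y \in |Y|_{\rm cl} : \operatorname{mld}_y(Y, \mathfrak{a}) < c\}$ is Zariski-closed. The standard first step is to take a log resolution $f: Y' \to Y$ of $(Y, \mathfrak{a})$: on $Y'$, the union of the exceptional locus, $f^{-1}(\mathrm{Sing}(Y))$, and the divisorial support of $\mathfrak{a}\mathcal{O}_{Y'}$ is simple normal crossings, and only finitely many prime divisors $E_1, \ldots, E_r$ arise from this resolution. For each $E_i$ the log discrepancy $a_{E_i}(Y, \mathfrak{a})$ is a fixed real number and $\{y \in Y : y \in f(E_i)\}$ is a closed subset of $Y$; hence the function
\[
y \ \longmapsto\ \min \bigl\{ a_{E_i}(Y, \mathfrak{a}) : y \in f(E_i),\ 1 \le i \le r \bigr\}
\]
is upper semi-continuous, and if one could establish that this minimum always equals $\operatorname{mld}_y(Y, \mathfrak{a})$, lower semi-continuity would follow at once.

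The main obstacle is precisely this equality, which can fail when $Y$ is neither smooth nor a locally complete intersection: there may exist a divisor $F$ over $Y$ with $y \in c_Y(F)$ whose log discrepancy is strictly smaller than every $a_{E_i}$ visible on the chosen log resolution. In the smooth and locally complete intersection cases, the known proofs overcome this obstacle by means of the jet-scheme description of log discrepancies, and for varieties with only quotient singularities an analogous arc-theoretic framework over the ambient smooth cover is used. Extending any of these techniques to arbitrary $\mathbb{Q}$-Gorenstein $Y$ is the crux of the problem. Moreover, the counterexample in Theorem \ref{thm:main}, together with the arc-theoretic pathology described in Remark \ref{rmk:thin}, warns that naive attempts to transfer the quotient-variety argument will fail precisely because the arc space of the hyperquotient $\overline{B}^{(\gamma)}$ can be a thin subset of itself, so that the usual motivic input is lost. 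I would therefore expect any successful proof to require a genuinely new arc-space or valuation-theoretic ingredient that bounds, uniformly in $y$, the ``essential'' divisors computing $\operatorname{mld}_y(Y, \mathfrak{a})$; absent such an input, I would be content to settle for the known constructibility of the mld function as a first step and then try to upgrade it to semi-continuity case by case using ACC-type boundedness results.
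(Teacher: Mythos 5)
The statement you were asked to prove is Conjecture~\ref{conj:LSC}, and the paper itself contains no proof of it: immediately before stating it the authors write that ``the LSC conjecture (in usual form) below is still open although it is a special case of Conjecture~\ref{conj:LSCg}.'' You correctly recognized this, declined to fabricate a proof, and instead described the standard obstacles; that is the right response, and there is nothing in the paper to compare your sketch against.

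Two small points about the sketch itself, should you want to tighten it. First, a sign slip: the function $y \mapsto \min\{a_{E_i}(Y,\mathfrak a) : y \in f(E_i)\}$ is \emph{lower} semi-continuous (the sublevel set $\{y : \min \le c\}$ is a finite union of the closed sets $f(E_i)$), which is of course what you need, so the word ``upper'' should be ``lower.'' Second, and more substantively, even on a fixed log resolution this minimum does not compute $\operatorname{mld}_y$: the definition requires divisors with center \emph{equal} to $\{y\}$ (or contained in $\{y\}$), not divisors whose image merely contains $y$, so one must pass to the stratification of the SNC boundary intersected with $f^{-1}(y)$ and account for divisors obtained by further blow-ups along strata. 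That refinement is exactly where the known proofs in the smooth and lci cases invoke jet/arc-space machinery, and, as you correctly observe citing Remark~\ref{rmk:thin}, the arc-space argument breaks for general hyperquotients because $\overline{B}^{(\gamma)}_{\infty}$ can be a thin subset of itself. Your overall assessment --- that a genuinely new valuation- or arc-theoretic input is needed --- is consistent with the status of the problem as presented in the paper.
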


Example \ref{eg:adic} gives a counterexample to the following conjectural property. 
\begin{conj}\label{conj:LSCadica}
Let $X$ be a klt variety and let $x \in X$ be a closed point.  
Let $\mathfrak{m}_x$ denote the maximal ideal corresponding to $x$. 
Then, there exists $\ell \in \mathbb{Z}_{\ge 0}$ with the following property: 
For any $\mathbb{Q}$-Gorenstein normal subvarieties $Y$ and $Z$ of $X$ satisfying 
$\dim Y = \dim Z$ and $I_Y + \mathfrak{m}_x ^{\ell} = I_Z + \mathfrak{m}_x ^{\ell}$, we have 
\[
\operatorname{mld}_x(Y) = \operatorname{mld}_x(Z). 
\]
Here, $I_Y$ and $I_Z$ denote the defining ideal sheaves of $Y$ and $Z$ in $X$. 
\end{conj}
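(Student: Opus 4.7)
The plan is to use Example \ref{eg:adic} to refute Conjecture \ref{conj:LSCadica} directly. I would set $X := \mathbb{A}_k^6 / G$ with the closed point $x := x_0$, and for any candidate integer $\ell \in \mathbb{Z}_{\ge 0}$ proposed by the conjecture, exhibit the pair of subvarieties $Y$ and $Z := Y_n$ of Example \ref{eg:adic} for any chosen $n \ge \ell$ as the required counterexample.

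First, I would verify the hypotheses of the conjecture. The ambient $X$ is klt because it is a finite quotient of a smooth variety. Both $Y$ and $Y_n$ are normal hypersurfaces of dimension $5$ in $X$ passing through $x$. The $\mathbb{Q}$-Gorenstein property of $Y_n$ is noted in Example \ref{eg:adic} (it is in fact klt). For $Y \simeq B_2 \times \mathbb{A}_k^1$, I would establish $\mathbb{Q}$-Gorensteinness by a short check that $\gamma$ acts on $\omega_{\overline{B}_2}$ by the scalar $\xi^2$ (via adjunction from $\omega_{\overline{A}_2}$, using that the defining equation $f$ is $G$-invariant), so that $\omega_{\overline{B}_2}^{\otimes 3}$ is $G$-invariant and descends as a line bundle on $B_2$, making $3K_{B_2}$ Cartier and hence $K_Y$ $\mathbb{Q}$-Cartier.

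Next comes the ideal-agreement step. Since both $g$ and $g_n = g + x_6^n$ lie in $R^G$, the ideals $I_Y$ and $I_{Y_n}$ inside $R^G$ are the principal ideals $(g)$ and $(g_n)$. Because $x_6 \in R^G \cap \mathfrak{m}_x$, the element $x_6^n$ lies in $\mathfrak{m}_x^\ell$ whenever $n \ge \ell$, so $g \equiv g_n \pmod{\mathfrak{m}_x^\ell}$, and consequently $I_Y + \mathfrak{m}_x^\ell = I_{Y_n} + \mathfrak{m}_x^\ell$.

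Finally, I would invoke the computations $(\heartsuit)$ already established in Example \ref{eg:adic}, namely $\operatorname{mld}_x(Y) = 3$ while $\operatorname{mld}_x(Y_n) \le 8/3 < 3$, so the conclusion of the conjecture fails for this $\ell$. Since $\ell$ was arbitrary, no universal $\ell$ can exist. I expect the only delicate bookkeeping to be the $\mathbb{Q}$-Gorensteinness verification for $Y$; everything else is a direct assembly of statements already present in the paper, and I anticipate no serious obstacle.
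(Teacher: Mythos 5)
Your proposal is correct and takes essentially the same approach as the paper: Example~\ref{eg:adic} is precisely the counterexample the paper offers, and you have simply made explicit the choices ($X = \mathbb{A}_k^6/G$, $x = x_0$, $Y$ and $Z = Y_n$ with $n \ge \ell$) and the verification that the ideal-agreement hypothesis holds because $g_n - g = x_6^n \in \mathfrak{m}_x^\ell$, with the conclusion following from $(\heartsuit)$. The one detail you flag as delicate, the $\mathbb{Q}$-Gorensteinness of $Y$, is already implicit in the paper since $\operatorname{mld}_{x_0}(B_2)$ is computed in Theorem~\ref{thm:main}.
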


\noindent
Conjecture \ref{conj:LSCadica} relates to the ideal-adic semi-continuity conjecture below (\cite{MN18}*{Conjecture 7.3}). 
\begin{conj}[Ideal-adic semi-continuity conjecture]\label{conj:LSCadic}
Let $X$ be a klt variety and let $x \in X$ be a closed point.  
Let $\mathfrak{m}_x$ denote the maximal ideal corresponding to $x$. 
Then, for given $r \in \mathbb{R}_{> 0}$, there exists $\ell \in \mathbb{Z}_{\ge 0}$ with the following property: 
For any ideal sheaves $\mathfrak{a}$ and $\mathfrak{b}$ on $X$ satisfying $\mathfrak{a} + \mathfrak{m}_x ^{\ell} = \mathfrak{b} + \mathfrak{m}_x ^{\ell}$, we have 
\[
\operatorname{mld}_x(X, \mathfrak{a}^r) = \operatorname{mld}_x(X, \mathfrak{b}^r). 
\]
\end{conj}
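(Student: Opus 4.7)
The proof plan is to reduce the desired equality of minimal log discrepancies to the equality of orders of vanishing along a bounded family of divisorial valuations. Fix $r \in \mathbb{R}_{>0}$. Recall that whenever $\operatorname{mld}_x(X, \mathfrak{a}^r)$ is finite and nonnegative, it is realized by some prime divisor $E$ over $X$ with $c_X(E) = \{x\}$, so $\operatorname{ord}_E(\mathfrak{m}_x) \ge 1$ and $a_E(X, \mathfrak{a}^r) = a_E(X) - r \operatorname{ord}_E(\mathfrak{a})$. The driving elementary observation is that if $\operatorname{ord}_E(\mathfrak{a}) < \ell$, then $\operatorname{ord}_E(\mathfrak{m}_x^\ell) \ge \ell > \operatorname{ord}_E(\mathfrak{a})$, so the hypothesis $\mathfrak{a} + \mathfrak{m}_x^\ell = \mathfrak{b} + \mathfrak{m}_x^\ell$ forces $\operatorname{ord}_E(\mathfrak{a}) = \operatorname{ord}_E(\mathfrak{b})$ and hence $a_E(X, \mathfrak{a}^r) = a_E(X, \mathfrak{b}^r)$.

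First, I would search for a uniform constant $C = C(X, x, r)$ such that whenever a divisor $E$ with center $\{x\}$ realizes $\operatorname{mld}_x(X, \mathfrak{a}^r) \ge 0$ for some nonzero ideal $\mathfrak{a}$, one has $\operatorname{ord}_E(\mathfrak{a}) \le C$. Taking $\ell := \lceil C \rceil + 1$ would then close the argument: for any $\mathfrak{a}, \mathfrak{b}$ with $\mathfrak{a} + \mathfrak{m}_x^\ell = \mathfrak{b} + \mathfrak{m}_x^\ell$ and any minimizer $E$ for $\mathfrak{a}^r$, one gets $a_E(X, \mathfrak{b}^r) = a_E(X, \mathfrak{a}^r) = \operatorname{mld}_x(X, \mathfrak{a}^r)$, proving $\operatorname{mld}_x(X, \mathfrak{b}^r) \le \operatorname{mld}_x(X, \mathfrak{a}^r)$, with the reverse inequality following by symmetry. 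Since $a_E(X, \mathfrak{a}^r) \ge 0$ yields $\operatorname{ord}_E(\mathfrak{a}) \le a_E(X) / r$, the task reduces to bounding $a_E(X)$ uniformly over all divisors $E$ with center $\{x\}$ that realize some minimal log discrepancy $\operatorname{mld}_x(X, \mathfrak{a}^r)$.

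The main obstacle is therefore precisely this uniform boundedness of log discrepancies of minimizing divisors. It is an instance of the conjectural boundedness of divisors computing the mld with a fixed real coefficient, closely tied to Shokurov's ACC conjecture and to the minimal model program. For smooth $X$ it is accessible via the jet scheme interpretation of log discrepancies, which yields the conjecture in that case; for klt $X$ only partial boundedness results are currently available. I expect any unconditional proof of Conjecture \ref{conj:LSCadic} to require either a genuinely new boundedness theorem for minimizing valuations on klt singularities, or an arc-space approach going beyond the finite quotient setting underlying the counterexamples in the present paper.
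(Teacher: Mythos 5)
This statement is labeled as a \emph{conjecture} in the paper, not a theorem, and the paper offers no proof of it; the only commentary the paper makes is the remark immediately following it, namely that the conjecture follows from the ACC conjecture by Kawakita's Theorem~4.6 in \cite{Kaw21}. You have correctly recognized that no unconditional proof is within reach, and your reduction is essentially the right one: the hypothesis $\mathfrak{a} + \mathfrak{m}_x^\ell = \mathfrak{b} + \mathfrak{m}_x^\ell$ forces $\operatorname{ord}_E(\mathfrak{a}) = \operatorname{ord}_E(\mathfrak{b})$ for any divisor $E$ with $\operatorname{ord}_E(\mathfrak{a}) < \ell$ and center $\{x\}$, so the whole problem collapses to a uniform bound on $\operatorname{ord}_E(\mathfrak{a})$ (equivalently, on $a_E(X)$) over divisors $E$ computing $\operatorname{mld}_x(X,\mathfrak{a}^r)$. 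That boundedness is precisely the content of the conditional implication from ACC. So your analysis aligns with the paper's stance.

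Two small points to tighten if you were to write this up: first, the minimizer-based argument presupposes $\operatorname{mld}_x(X,\mathfrak{a}^r) \ge 0$; the case $\operatorname{mld}_x(X,\mathfrak{a}^r) = -\infty$ has no minimizing divisor and must be treated separately (e.g.\ via the corresponding statement for log canonical thresholds, or by exhibiting a sequence of divisors with bounded $\operatorname{ord}_E(\mathfrak{a})$ and arbitrarily negative log discrepancy). Second, it is worth making explicit that the uniform constant $C(X,x,r)$ must be independent of $\mathfrak{a}$ for the symmetry step to apply to both $\mathfrak{a}$ and $\mathfrak{b}$ simultaneously; this is exactly where the fixed-germ boundedness conjecture (and hence ACC) enters.
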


\noindent
Note that Conjecture \ref{conj:LSCadic} is known to be true if the ACC conjecture is true (see \cite{Kaw21}*{Theorem 4.6}).

\section{Questions}\label{section:Q}

It seems to be well-known to experts that the PIA conjecture (Conjecture \ref{conj:PIA}) is true when $\dim X = 3$.
We note a proof here since we could not find any reference.

\begin{prop}\label{prop:dim3}
The PIA conjecture (Conjecture \ref{conj:PIA}) is true when $\dim X =3$. 
\end{prop}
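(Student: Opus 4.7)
The inequality $\operatorname{mld}_{x}\bigl(X, \mathfrak{a}\mathcal{O}_X(-D)\bigr) \le \operatorname{mld}_{x}(D, \mathfrak{a}\mathcal{O}_D)$ is standard in any dimension; I would handle it first via adjunction on a log resolution. Given a prime divisor $F$ over $D$ with $c_D(F) = x$, one chooses a log resolution $g : Y \to X$ of $(X, D, \mathfrak{a})$ on which $F$ is realized as an irreducible component of $E \cap D_Y$, where $D_Y$ is the strict transform of $D$ and $E$ is a prime divisor on $Y$. The normality and Cartier property of $D$, combined with the adjunction identity $K_{D_Y} = (K_Y + D_Y)|_{D_Y}$ and the compatibility $\mathfrak{a}\mathcal{O}_{D_Y} = (g^{*}\mathfrak{a})|_{D_Y}$, then yield $a_F(D, \mathfrak{a}\mathcal{O}_D) = a_E\bigl(X, \mathfrak{a}\mathcal{O}_X(-D)\bigr)$, whence the inequality.

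For the reverse inequality, my plan is to induct down to the two-dimensional PIA (\ref{conj:PIA}.1). After reducing to the case where both mld's are finite by classical Koll\'ar--Shokurov inversion of adjunction, let $E$ be a prime divisor over $X$ with $c_X(E) = \{x\}$ that attains (or nearly attains) the infimum defining $\operatorname{mld}_{x}\bigl(X, \mathfrak{a}\mathcal{O}_X(-D)\bigr)$. By the standard divisor-extraction results available in three-dimensional MMP, $E$ can be realized as the unique $f$-exceptional divisor of a $\mathbb{Q}$-factorial divisorial contraction $f : \tilde{X} \to X$. The crepant decomposition
\[
f^{*}(K_X + D) = K_{\tilde{X}} + \tilde{D} + \bigl(1 - a_E(X, D)\bigr) E,
\]
where $\tilde{D}$ is the strict transform of $D$, reduces the inequality to an mld comparison on the pair $\bigl(\tilde{X}, \tilde{D} + (1 - a_E(X, D)) E\bigr)$ along the curve $C := E \cap \tilde{D}$. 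Applying the two-dimensional PIA (\ref{conj:PIA}.1) to the normal surface $\tilde{D}$ with the Cartier curve $C$ and the restricted $\mathbb{R}$-ideal $\mathfrak{a}\mathcal{O}_{\tilde{D}}$, and then transferring the resulting equality back to $D$ via the birational morphism $\tilde{D} \to D$, should yield the reverse inequality.

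The main obstacle is the final reduction. One needs $\tilde{D}$ to be normal at each point of $C$ lying over $x$, which may require a further small $\mathbb{Q}$-factorial modification or taking the normalization of $\tilde{D}$ and computing the different carefully. One also needs to account for the coefficient $1 - a_E(X, D)$ on the divisor $E$ in the boundary of $\bigl(\tilde{X}, \tilde{D} + (1 - a_E(X, D)) E\bigr)$, whose contribution to log discrepancies along $C$ must be tracked via the precise form of two-dimensional inversion of adjunction, i.e., Shokurov's result (\ref{conj:PIA}.1). The argument relies on divisorial extraction in dimension three together with two-dimensional PIA, which is precisely why it does not extend upward; indeed, Theorem \ref{thm:main} shows the analogous induction step breaks once $\dim X \ge 5$.
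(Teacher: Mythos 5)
Your plan for the easy inequality (adjunction on a log resolution) and your overall impulse to attack the reverse inequality by extracting a divisor computing $\operatorname{mld}_{x}\bigl(X,\mathfrak a\mathcal O_X(-D)\bigr)$ and inducting down to dimension two are in the right spirit: this is essentially the mechanism behind Kawakita's result \cite{Kaw21}*{Theorem 7.5}, which the paper invokes. However, your proposal has a genuine gap that the paper's case analysis is specifically designed to avoid.

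The divisor-extraction results you cite (BCHM-type extraction producing $f\colon\tilde X\to X$ with $E$ as the unique exceptional) require that $a_E(X,\Delta)\le 1$ for a suitable klt boundary $\Delta$. In particular, the argument only gets off the ground if $\operatorname{mld}_x\bigl(X,\mathfrak a\mathcal O_X(-D)\bigr)\le 1$. If $\operatorname{mld}_x\bigl(X,\mathfrak a\mathcal O_X(-D)\bigr)>1$, there may be no extractable divisor computing the mld, and the whole reduction to $\tilde D$ fails. Your proposal does not address this case at all. The paper handles it by a separate argument: from $\operatorname{mld}_x\bigl(X,\mathcal O_X(-D)\bigr)>1$ it deduces $\operatorname{mld}_x(X)>2$, so $X$ is smooth at $x$ by \cite{Amb99}*{Lemma 3.2}, and then the PIA for smooth ambient varieties \cite{EMY03} gives the conclusion. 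Without this branch, your proof is incomplete, and the gap is not a matter of technical polish but of a missing idea.

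There are two further points worth noting. First, for the reduction to the case of finite minimal log discrepancies you appeal to ``classical Koll\'ar--Shokurov inversion of adjunction,'' but the precise statement needed here (log canonicity of $(D,\mathfrak a\mathcal O_D)$ implies log canonicity of $\bigl(X,\mathfrak a\mathcal O_X(-D)\bigr)$, with no restriction on the coefficients of $\mathfrak a$) is Kawakita's theorem \cite{Kaw07}, which is substantially harder than the lc--plt inversion of adjunction for boundary divisors. Second, even restricting to the extractable case, the final transfer from $\bigl(\tilde D, C\bigr)$ back to $(D,\mathfrak a\mathcal O_D)$ --- tracking the different, the normality of $\tilde D$, and the coefficient on $E$ --- is exactly the content of \cite{Kaw21}*{Theorem 7.5}; the paper quotes that result rather than reproducing it, which is a cleaner and less error-prone route. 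In short: your approach, properly completed, would essentially re-derive Kawakita's theorem and still leave the $\operatorname{mld}>1$ case unresolved, whereas the paper's three-case split (mld of $D$ equal to $0$, then mld of $X$-side at most $1$, then greater than $1$) dispatches each case by a known black box.
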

\begin{proof}
By adjunction (see \cite{92}*{17.2.1}), the inequality 
\[
\operatorname{mld}_x \bigl( X, \mathfrak{a} \mathcal{O}_X(-D) \bigr) \le \operatorname{mld}_x (D, \mathfrak{a} \mathcal{O}_D)
\tag{$\clubsuit$}
\]
holds in any case. 
Therefore, we may assume that $\operatorname{mld}_x (D, \mathfrak{a} \mathcal{O}_D) \ge 0$. 

Suppose $\operatorname{mld}_x (D, \mathfrak{a} \mathcal{O}_D) = 0$. 
Then $(D, \mathfrak{a} \mathcal{O}_D)$ is log canonical at $x$. 
Therefore, by inversion of adjunction on log canonicity (\cite{Kaw07}), 
$\bigl( X, \mathfrak{a} \mathcal{O}_X(-D) \bigr)$ is also log canonical at $x$. 
Hence, we have 
\[
\operatorname{mld}_x \bigl( X, \mathfrak{a} \mathcal{O}_X(-D) \bigr) \ge 0 = \operatorname{mld}_x (D, \mathfrak{a} \mathcal{O}_D), 
\]
which shows the opposite inequality of ($\clubsuit$). 

Suppose that $\operatorname{mld}_x (D, \mathfrak{a} \mathcal{O}_D)>0$. 
Since $\dim D = 2$ and $\operatorname{mld}_x (D) > 0$, $D$ is klt at $x$.
By inversion of adjunction, $(X,D)$ is purely log terminal in a neighborhood of $x$. 
In particular, $X$ is klt at $x$. 
When $\operatorname{mld}_x \bigl( X, \mathfrak{a} \mathcal{O}_X(-D) \bigr) \le 1$, 
the assertion follows from \cite{Kaw21}*{Theorem 7.5}. 
Suppose $\operatorname{mld}_x \bigl( X, \mathfrak{a} \mathcal{O}_X(-D) \bigr) > 1$. 
Since we have 
\[
\operatorname{mld}_x (X) \ge \operatorname{mld}_x \bigl( X, \mathcal{O}_X (-D) \bigr) + 1 > 2 
\]
in this case, $X$ is smooth at $x$ by \cite{Amb99}*{Lemma 3.2}.
Then the assertion follows from the PIA conjecture for smooth varieties (see (\ref{conj:PIA}.2) in Introduction).
We complete the proof. 
\end{proof}

Since Theorem \ref{thm:main} gives a counterexample to the PIA conjecture for $\dim X \ge 5$, 
the following question naturally arises: 

\begin{quest}\label{quest:dim4}
Is Conjecture \ref{conj:PIA} true if $\dim  X =4$? 
\end{quest}

\begin{rmk}
We would like to thank Chen Jiang and an anonymous referee for their valuable comments on a preprint version of this paper, in which they pointed out that Conjecture \ref{conj:PIA} holds for four-dimensional varieties with quotient singularities as follows. 
By the same argument as in Proposition \ref{prop:dim3}, we may assume that 
\[
\operatorname{mld}_x \bigl( X, \mathfrak{a} \mathcal{O}_X(-D) \bigr) \le \operatorname{mld}_x (D, \mathfrak{a} \mathcal{O}_D), \qquad 
\operatorname{mld}_x (D, \mathfrak{a} \mathcal{O}_D) >0.
\]
If $D$ has klt singularities, the conjecture holds by \cite{NS22}*{Theorem 1.4} (see (\ref{conj:PIA}.5) in Introduction). 
Thus, we may assume that $D$ has non-klt singularities. 
Since $\dim D = 3$ and $\operatorname{mld}_x (D) \ge \operatorname{mld}_x (D, \mathfrak{a} \mathcal{O}_D)>0$, this implies the existence of a one-dimensional log canonical center $C$ of $D$ passing through $x$. 
Then by \cite{Amb99}*{Theorem 0.1}, we have $\operatorname{mld}_x (D) \le 1$. 
Therefore, we have 
\[
\operatorname{mld}_x \bigl( X, \mathfrak{a} \mathcal{O}_X(-D) \bigr) \le \operatorname{mld}_x (D, \mathfrak{a} \mathcal{O}_D) \le \operatorname{mld}_x (D) \le 1. 
\]
Since $X$ has quotient singularities and is therefore klt, 
the conjecture follows from \cite{Kaw21}*{Theorem 7.5}. 
\end{rmk}

\noindent
Since $D = B_n$ is not klt in Example \ref{eg:main}, the following question is also reasonable. 

\begin{quest}\label{quest:PIAm}
Is Conjecture \ref{conj:PIA} true if $D$ is klt at $x$? 
\end{quest}

\appendix

\section{The exact value of $\operatorname{mld}_{x_0}(A_n, B_n)$}\label{section:exact}
In Theorem \ref{thm:main}, we have only proved the inequality $\operatorname{mld}_{x_0}(A_n, B_n) \le 1 + \frac{n}{3}$. 
However, it actually follows that
\[
\operatorname{mld}_{x_0}(A_n, B_n)  = \min \left \{ n, 1 + \frac{n}{3} \right \}. \tag{$\diamondsuit$}
\]
In this section, we give a brief proof of this equality. 

Let $C \subset \overline{B}_n$ be the subvariety defined by the ideal $(x_1, x_2, x_3) \subset R$. 
Let $\overline{V}_n \to \overline{A}_n$ and $\overline{W}_n \to \overline{B}_n$ be the blow-ups of $\overline{A}_n$ and $\overline{B}_n$ along $C$, respectively. 
Let $(x_1, \ldots , x_{n+3}), [X_1:X_2:X_3]$ be the coordinate of the blow-up $\overline{V}_n = \operatorname{Bl}_C \overline{A}_n$. 
The action of $\gamma$ on $\overline{A}_n$ extends to $\overline{V}_n$ by 
\[
X_1 \mapsto X_1, \qquad X_2 \mapsto \xi X_2, \qquad X_3 \mapsto \xi ^2 X_3.
\]
Note that $G$ also acts on $\overline{W}_n$. 

Let $V_n :=  \overline{V}_n / G$ and $W_n :=  \overline{W}_n / G$ denote the quotient varieties. 
We define $f_i$ and $\overline{f}_i$ for $i = 1,2,3,4$ as in the diagram below. 
\vspace{3mm}
\[
\xymatrix{
&&F \ar@{}[d]|{\text{\large $\cap$}} &\\
&& V_n = \overline{V}_n /G \ar[dr]^-{\overline{f}_4} & \\
\overline{F} \ar@{}[r]|{\text{\large $\subset$ \hspace{7mm}}} & \overline{V}_n = \operatorname{Bl}_C \overline{A}_n \ar[r]_-{\overline{f}_3} \ar@{}[d]|{\text{\large $\cup$}}  \ar[ur]^-{\overline{f}_2} & \overline{A}_n \ar[r]_-{\overline{f}_1} \ar@{}[d]|{\text{\large $\cup$}} & A_n \ar@{}[d]|{\text{\large $\cup$}} \\
\overline{E} \ar@{}[r]|{\text{\large $\subset$ \hspace{7mm}}}  & \overline{W}_n = \operatorname{Bl}_C \overline{B}_n \ar[r]^-{f_3} \ar[dr]_-{f_2} & \overline{B}_n \ar[r]^-{f_1} & B_n \\
&& W_n = \overline{W}_n /G \ar[ur]_-{f_4} &\\
&&E \ar@{}[u]|{\text{\large $\cup$}}&
}
\]
\vspace{3mm}

One can see that $\overline{W}_n$ is smooth, and the $G$-action on $\overline{W}_n$ is free (cf.\ Lemma \ref{lem:H}). 
Therefore, we conclude that $W_n$ is also smooth. 

Let $\overline{F}$ and $\overline{E}$ be the exceptional divisors of $\overline{f}_3$ and $f_3$, respectively. 
Let $F$ and $E$ be the images of $\overline{F}$ and $\overline{E}$, respectively. 
Then, one can see that 
\begin{itemize}
\item $\overline{E} = \overline{F} \cap \overline{W}_n$ and $E = F \cap W_n$,
\item $\overline{E}$ and $E$ are smooth, and
\item $\overline{f}^*_4 \bigl( K_{A_n} + B_n \bigr) = K_{V_n} + W_n + F$. 
\end{itemize}

Let $F_0 := \overline{f}_4 ^{-1} (x_0)$. 
Then, we have 
\[
\operatorname{mld}_{x_0}(A_n, B_n)  = \operatorname{mld}_{F_0}(V_n, W_n + F). \tag{i}
\]
Note that $F_0 \subset F$ and $\dim F_0 = 2$. 
Let $y \in F_0$ be a closed point. 

Note that $V_n$ has only quotient singularities, and $\overline{F}$ is defined by a semi-invariant equation in $\overline{V}_n$. 
Therefore, we may apply the PIA conjecture for $V_n$ and $F$, which is proved in \cite{NS3}*{Corollary 7.2}. 
Hence, we have 
\[
\operatorname{mld}_{y}(V_n, W_n + F) = \operatorname{mld}_{y}(F, W_n \cap F) = \operatorname{mld}_{y}(F, E). \tag{ii}
\]

Note that $F$ has only quotient singularities, and $\overline{E}$ is defined by an invariant equation in $\overline{F}$. 
Therefore, we may apply the PIA conjecture for $F$ and $E$, which is proved in \cite{NS22}*{Theorem 1.4} (see (\ref{conj:PIA}.5) in Introduction). 
Hence, we have 
\[
\operatorname{mld}_{y}(F, E) = 
\begin{cases}
\operatorname{mld}_{y}(E) & \text{if $y \in E$}, \\
\operatorname{mld}_{y}(F) & \text{if $y \not \in E$}. 
\end{cases} \tag{iii}
\]

We define points $\overline{p}_1, \overline{p}_2, \overline{p}_3 \in \overline{V}_n$ by 
\begin{align*}
\overline{p}_1 :=  \bigl( (0,0,0,0, \ldots, 0),[1 \colon 0 \colon 0] \bigr), \\
\overline{p}_2 :=  \bigl( (0,0,0,0, \ldots, 0),[0 \colon 1 \colon 0] \bigr), \\
\overline{p}_3 :=  \bigl( (0,0,0,0, \ldots, 0),[0 \colon 0 \colon 1] \bigr). 
\end{align*}
For each $i$, let $p_i \in F_0$ denote the image of $\overline{p}_i$. 
Then, one can see that 
\begin{itemize}
\item 
$p_1$, $p_2$ and $p_3$ are the only singular points of $F = \overline{F} /G$, and

\item 
$p_1, p_2, p_3 \in F$ has a quotient singularity of type $\frac{1}{3} \bigl( 1,2, \overbrace{1, \cdots , 1}^{n} \bigr)$. 
\end{itemize}
Therefore,  the age formula (Proposition \ref{prop:age}), we have 
\[
\operatorname{mld}_{y}(F) = 
\begin{cases}
n+2 & \text{if $y \in F \setminus \{ p_1, p_2, p_3 \}$}, \\
\frac{n}{3} + 1 & \text{if $y = p_i$ for some $i = 1,2,3$}. 
\end{cases}
\]
Furthermore, since $E$ is smooth, we have $\operatorname{mld}_{y}(E) = \dim E = n+1$ for all $y \in E$. 

Therefore, by (iii), we conclude that
\[
\operatorname{mld}_{y}(F, E) = 
\begin{cases}
n + 1 & \text{if $y \in F_0 \cap E$}, \\
\frac{n}{3} + 1 & \text{if $y = p_i$ for some $i = 1,2,3$}, \\
n+2 & \text{if $y \in F_0 \setminus (E \cup \{ p_1, p_2, p_3 \})$}. \tag{iv}
\end{cases}
\]
By (ii), (iv) and \cite{Amb99}*{Proposition 2.1}, we conclude that
\begin{align*}
\operatorname{mld}_{F_0}(V_n, W_n + F) 
&= \min \left \{ n + 1 - \dim (F_0 \cap E), \ 1 + \frac{n}{3},\ n+2 - \dim F_0 \right \} \\
&= \min \left \{ n, 1 + \frac{n}{3} \right \}. 
\end{align*}
Therefore, the assertion $(\diamondsuit)$ follows from (i).


\begin{bibdiv}
\begin{biblist*}

\bib{Amb99}{article}{
   author={Ambro, Florin},
   title={On minimal log discrepancies},
   journal={Math. Res. Lett.},
   volume={6},
   date={1999},
   number={5-6},
   pages={573--580},
}

\bib{BCHM}{article}{
   author={Birkar, Caucher},
   author={Cascini,Paolo},
   author={Hacon, Christopher D.},
   author={McKernan, James},
   title={Existence of minimal models for varieties of log general type},
   journal={J. Amer. Math. Soc.},
   volume={23},
   date={2010},
   number={2},
   pages={405--468},
}

\bib{EM04}{article}{
   author={Ein, Lawrence},
   author={Musta{\c{t}}{\v{a}}, Mircea},
   title={Inversion of adjunction for local complete intersection varieties},
   journal={Amer. J. Math.},
   volume={126},
   date={2004},
   number={6},
   pages={1355--1365},
}

\bib{EMY03}{article}{
   author={Ein, Lawrence},
   author={Musta{\c{t}}{\u{a}}, Mircea},
   author={Yasuda, Takehiko},
   title={Jet schemes, log discrepancies and inversion of adjunction},
   journal={Invent. Math.},
   volume={153},
   date={2003},
   number={3},
   pages={519--535},
}

\bib{GHS03}{article}{
   author={Graber, Tom},
   author={Harris, Joe},
   author={Starr, Jason},
   title={Families of rationally connected varieties},
   journal={J. Amer. Math. Soc.},
   volume={16},
   date={2003},
   number={1},
   pages={57--67},
}

\bib{HM07}{article}{
   author={Hacon, Christopher D.},
   author={Mckernan, James},
   title={On Shokurov's rational connectedness conjecture},
   journal={Duke Math. J.},
   volume={138},
   date={2007},
   number={1},
   pages={119--136},
}

\bib{Har77}{book}{
   author={Hartshorne, Robin},
   title={Algebraic geometry},
   note={Graduate Texts in Mathematics, No. 52},
   publisher={Springer-Verlag, New York-Heidelberg},
   date={1977},
}

\bib{Ish96}{article}{
   author={Ishii, Shihoko},
   title={The canonical modifications by weighted blow-ups},
   journal={J. Algebraic Geom.},
   volume={5},
   date={1996},
   number={4},
   pages={783--799},
}

\bib{IshiiBook}{book}{
   author={Ishii, Shihoko},
   title={Introduction to singularities},
   edition={2},
   publisher={Springer, Tokyo},
   date={2018},
   pages={x+236},
}

\bib{Kaw07}{article}{
   author={Kawakita, Masayuki},
   title={Inversion of adjunction on log canonicity},
   journal={Invent. Math.},
   volume={167},
   date={2007},
   number={1},
   pages={129--133},
}

\bib{Kaw21}{article}{
   author={Kawakita, Masayuki},
   title={On equivalent conjectures for minimal log discrepancies on smooth
   threefolds},
   journal={J. Algebraic Geom.},
   volume={30},
   date={2021},
   number={1},
   pages={97--149},
}

\bib{Kaw24}{book}{
   author={Kawakita, Masayuki},
   title={Complex algebraic threefolds},
   series={Cambridge Studies in Advanced Mathematics},
   volume={209},
   publisher={Cambridge University Press, Cambridge},
   date={2024},
}

\bib{Kaw99}{article}{
   author={Kawamata, Yujiro},
   title={Deformations of canonical singularities},
   journal={J. Amer. Math. Soc.},
   volume={12},
   date={1999},
   number={1},
   pages={85--92},
   issn={0894-0347},
}

\bib{92}{collection}{
   author={Koll{\'a}r (ed.), J{\'a}nos},
   title={Flips and abundance for algebraic threefolds},
   note={Papers from the Second Summer Seminar on Algebraic Geometry held at
   the University of Utah, Salt Lake City, Utah, August 1991;
   Ast\'{e}risque No. 211 (1992)},
   publisher={Soci\'{e}t\'{e} Math\'{e}matique de France, Paris},
   date={1992}, 
   label={Kol92}
}

\bib{Kol13}{book}{
   author={Koll{\'a}r, J{\'a}nos},
   title={Singularities of the minimal model program},
   series={Cambridge Tracts in Mathematics},
   volume={200},
   note={With a collaboration of S\'andor Kov\'acs},
   publisher={Cambridge University Press, Cambridge},
   date={2013},
}

\bib{KM98}{book}{
   author={Koll{\'a}r, J{\'a}nos},
   author={Mori, Shigefumi},
   title={Birational geometry of algebraic varieties},
   series={Cambridge Tracts in Mathematics},
   volume={134},
   publisher={Cambridge University Press, Cambridge},
   date={1998},
}

\bib{MN18}{article}{
   author={Musta\c{t}\u{a}, Mircea},
   author={Nakamura, Yusuke},
   title={A boundedness conjecture for minimal log discrepancies on a fixed
   germ},
   conference={
      title={Local and global methods in algebraic geometry},
   },
   book={
      series={Contemp. Math.},
      volume={712},
      publisher={Amer. Math. Soc., Providence, RI},
   },
   date={2018},
   pages={287--306},
}

\bib{Nak16}{article}{
   author={Nakamura, Yusuke},
   title={On semi-continuity problems for minimal log discrepancies},
   journal={J. Reine Angew. Math.},
   volume={711},
   date={2016},
   pages={167--187},
}

\bib{NS22}{article}{
   author={Nakamura, Yusuke},
   author={Shibata, Kohsuke},
   title={Inversion of adjunction for quotient singularities},
   journal={Algebr. Geom.},
   volume={9},
   date={2022},
   number={2},
   pages={214--251},
}

\bib{NS2}{article}{
   author={Nakamura, Yusuke},
   author={Shibata, Kohsuke},
   title={Inversion of adjunction for quotient singularities II: non-linear
   actions},
   journal={Algebr. Geom.},
   volume={12},
   date={2025},
   number={4},
   pages={443--496},
}

\bib{NSs}{article}{
   author={Nakamura, Yusuke},
   author={Shibata, Kohsuke},
   title={Shokurov's index conjecture for quotient singularities},
   conference={
      title={Higher dimensional algebraic geometry---a volume in honor of V.
      V. Shokurov},
   },
   book={
      series={London Math. Soc. Lecture Note Ser.},
      volume={489},
      publisher={Cambridge Univ. Press, Cambridge},
   },
   date={2025},
   pages={220--230},
}

\bib{NS3}{article}{
   author={Nakamura, Yusuke},
   author={Shibata, Kohsuke},
   title={Inversion of adjunction for quotient singularities III: semi-invariant case},
   eprint={arXiv:2312.05808v1}
}

\bib{Nakbook}{book}{
   author={Nakayama, Noboru},
   title={Zariski-decomposition and abundance},
   series={MSJ Memoirs},
   volume={14},
   publisher={Mathematical Society of Japan, Tokyo},
   date={2004},
   pages={xiv+277},
}

\bib{PS16}{article}{
   author={Prokhorov, Yuri},
   author={Shramov, Constantin},
   title={Jordan property for Cremona groups},
   journal={Amer. J. Math.},
   volume={138},
   date={2016},
   number={2},
   pages={403--418},
}

\bib{Sho93}{article}{
   author={Shokurov, V. V.},
   title={$3$-fold log flips, With an appendix by Yujiro Kawamata},
   journal={Russian Acad. Sci. Izv. Math.},
   volume={40},
   date={1993},
   number={1},
   pages={95--202},
}

\bib{Sho04}{article}{
   author={Shokurov, V. V.},
   title={Letters of a bi-rationalist. V. Minimal log discrepancies and
   termination of log flips},
   journal={Tr. Mat. Inst. Steklova},
   volume={246},
   date={2004},
   number={Algebr. Geom. Metody, Svyazi i Prilozh.},
   pages={328--351},
   translation={
      journal={Proc. Steklov Inst. Math.},
      date={2004},
      number={3 (246)},
      pages={315--336},
   },
}

\bib{Tak03}{article}{
   author={Takayama, Shigeharu},
   title={Local simple connectedness of resolutions of log-terminal
   singularities},
   journal={Internat. J. Math.},
   volume={14},
   date={2003},
   number={8},
   pages={825--836},
   doi={10.1142/S0129167X0300196X},
}

\bib{Wat74}{article}{
   author={Watanabe, Keiichi},
   title={Certain invariant subrings are Gorenstein I},
   journal={Osaka Math. J.},
   volume={11},
   date={1974},
   pages={1--8},
}

\end{biblist*}
\end{bibdiv}

\end{document}